\newtheorem{theorem}{Theorem}[section]
\newtheorem{lemma}[theorem]{Lemma}
\newtheorem{prop}[theorem]{Proposition}
\newtheorem{remark}[theorem]{Remark}
\newtheorem{definition}[theorem]{Definition}
\newcommand{\R}{\mathbb{R}}
\newcommand{\C}{\mathbb{C}}
\newcommand{\Z}{\mathbb{Z}}
\newcommand{\K}{\mathbb{K}}
\newcommand{\Fcal}{\mathcal{F}}
\newcommand{\Jcal}{\mathcal{J}}
\newcommand{\Mcal}{\mathcal{M}}
\newcommand{\Scal}{\mathcal{S}}
\newcommand{\Gcal}{\mathcal{G}}
\newcommand{\Xcal}{\mathcal{X}}
\newcommand{\Ocal}{\mathcal{O}}
\newcommand{\gfrak}{\mathfrak{g}}
\newcommand{\mfrak}{\frak{m}}
\newcommand{\hfrak}{\frak{h}}
\newcommand{\gz}{{\frak{g}}^{0}}
\newcommand{\ufrak}{\frak{u}}
\newcommand{\nfrak}{\frak{n}}
\newcommand{\E}{\textup{\bf E}}
\begin{document}

\begin{titlepage}

\begin{centering}
{\LARGE \bf Spin Groups of Super Metrics and 

\vspace {.4cm}  a Theorem of Rogers}

\vspace{1cm}

\qquad \qquad {\bf  RONALD FULP} \\

\vspace {.5cm}

\qquad \qquad {\it Department of Mathematics} \\
\qquad \qquad {\it North Carolina State University } \\
\qquad \qquad {\it Raleigh NC 27695} \\
\qquad \qquad {\it fulp@math.ncsu.edu}

\vspace{.5cm}

December 2, 2014
\vspace{.5cm}
\begin{abstract} We derive the canonical forms of super Riemannian metrics and the local isometry groups of such metrics. For certain super metrics we also
compute the simply connected covering groups of the local isometry groups and interpret these as local spin groups
of the super metric. Super metrics define reductions $\Ocal \Scal_g$ of the relevant  frame bundle. When principal bundles $\widetilde \Scal_g$ exist with structure group the simply
connected covering group $\tilde \Gcal$ of the structure group of $\Ocal \Scal_g,$ 
representations of $\tilde\Gcal$ define vector bundles associated to $\widetilde \Scal_g$ whose sections are "spinor fields" associated with the super metric $g.$
Using a generalization of a Theorem of Rogers, which is itself one of the main results of this paper, we show that for super metrics we call body reducible, each such simply connected covering group $\tilde \Gcal$ is a super Lie group
with a conventional super Lie algebra as its corresponding super Lie algebra.

Some of our results were known to DeWitt \cite{Dewitt} using formal Grassmann series and others were known by Rogers 
using finitely many Grassmann generators and passing to a direct limit. We work exclusively in the category of $G^{\infty}$
supermanifolds with $G^{\infty}$ mappings. Our supernumbers are infinite series of products of Grassmann generators
subject to convergence in the $\ell_1$ norm introduced by Rogers \cite{Rsuperman,R}.\newline

\noindent{\bf  Keywords: $G^{\infty}$-supermanifolds, super Riemannian metrics, canonical forms, local isometry groups, super Lie groups, covering groups, conventional super Lie algebras}

 \end{abstract}

\end{centering}

\end{titlepage}

\pagebreak

\section{Introduction} The canonical form of a super Riemannian metric $g$ has been known \cite{Dewitt} for some time
in the case that the supermanifold on which $g$ is defined is modeled on super  vector spaces of the form $\R^{m|n}$ where
the components of the vectors in $\R^{m|n}$ are supernumbers defined as formal power series of products of Grassmann generators.
In this case there are no convergence issues. There seems to be no corresponding result when the underlying space of supernumbers are series of products of Grassmann generators defined by requiring that the series converge in the $\ell_1$ norm defined by Rogers
\cite{Rsuperman,R}. It is our intent to determine such canonical forms in this case and to compute the corresponding local isometry groups $\Gcal$ along with their simply connected covering groups $\tilde \Gcal.$ To do this we require a generalization of a Theorem of Rogers. It turns out that using this generalization we can show that for a large class of super metrics, called body reducible super metrics,  the simply connected covering group of each local isometry group is a super Lie group whose super Lie algebra is a conventional super Lie algebra. Moreover, each such covering group is a semi-direct product  of an ordinary simply connected finite-dimensional Lie group  with a group $N$ which is an infinite-dimensional generalization of a nilpotent group called a quasi-nilpotent group by Wojtyn'ski'\cite{W} who discovered groups of this type.  When representations of  factors of the semi-direct product  $\tilde \Gcal$ are known each such representation  gives rise to a representation of the semi-direct product  itself.  In every case representations of $\tilde \Gcal$  define induced vector bundles and we think of the local sections of such vector bundles as  local spinor fields associated
with the super metric. In a real sense these fields have a right to be called super spinor fields but the notion of super spin also arises in quantum field theory \cite{Q} where super spin is a quantum number obtained from  supersymmetric extensions of the Poincare algebra. Quantum numbers of this type have already proven to be useful in that context. We do not know of applications of the type of super spinor field discussed in our work here but it would seem that there should be applications to supergravity.

In this paper we also prove a generalization of a Theorem of Rogers. We believe this result has significance in its own right quite aside from its present application to the program outlined above. We show that if $\gfrak=\gz \oplus \gfrak^1$ is a $\Z_2$ graded Banach 
Lie algebra and $\hfrak=(\Lambda \otimes \gfrak)^0$ is the even part of corresponding conventional super Lie algebra $\Lambda \otimes \gfrak$ then the kernel of
the body mapping from $\hfrak$ into $\gz$ is a quasi-nilpotent Banach Lie algebra in the sense of Wojtyn'ski \cite{W}. This Lie algebra
has a global Banach Lie group structure defined via the exponential mapping and so is a quasi-nilpotent Banach Lie group. If we denote this group by $N$ and if  $\tilde G$ is the unique simply connected 
finite-dimensional Banach Lie group having $\gz$ as its Lie algebra, then we prove that there is an action of $\tilde G$ on $N$
which defines a semi-direct product structure  $H=\widetilde G \times N.$ Moreover there is a $G^{\infty}$ atlas on $H$ relative to which $H$ is a $G^{\infty}$ super Lie group and this super Lie group has $\hfrak$ as its corresponding super Lie algebra $\hfrak.$

Rogers \cite{R} proves a version of this Theorem in her book. She shows that in the case that the supernumbers are generated by a finite number of Grassmann generators then this result holds. In that case one has at ones disposal  results from the theory of finite-dimensional Lie groups and Lie algebras. She is able to recover an infinite-dimensional version of her theorem by passing to a direct limit. She therefore has an analytic version of her theorem which utilizes the direct limit topology. 

In our version of the Theorem we stick with $G^{\infty}$ structures rather than analytic structures and we use Rogers $\ell_1$ 
topology at various stages of our proof. Our proof follows hers but differs in details since it necessarily requires the appropriate modifications in order  to remain 
within the category of infinite-dimensional Banach Lie groups and Banach Lie algebras.

In Section 2 we lay out our preliminary definitions and conventions. We follow the conventions of Rogers \cite{R} in the main,
but at times use slightly different notation. In subsections 1 and 2 of Section 3 we define the different norms required to make 
contact with the results of Wojtyn'ski's \cite{W} . In subsection 3 of Section 3 we prove our version of Rogers theorem.
In the fourth and final section of the paper we present our results on super Riemannian metrics.

\section{Preliminaries}

In this section we briefly introduce our conventions and assumptions regarding supernumbers, superspace, supermanifolds and basic constructions.

The starting point is a Grassmann algebra with coefficients in the field $\K$ which denotes either the 
field of real numbers or the field of complex numbers.We assume the Grassmann algebra is countably infinitely generated with generators ${\zeta}^{1}, {\zeta}^{2},{\zeta}^{3} \cdots $ which are  anticommuting indeterminates;
\begin{equation}
{\zeta}^{i}{\zeta}^{j}= -{\zeta}^{j}{\zeta}^{i}.
\end{equation}
As usual if $i=j$ the square of any Grassmann generator is zero.

Although we sometimes are not explicit regarding our convention we utilize a multi-index notation $I=(i_{1},i_{2},\cdots, i_{k})$ with $1 \leq i_{1}< i_{2}< \cdots <i_{k} $
and denote the set of all increasing strings of positive integers of length $k$  by $\mathcal{I}_{k}.$
 We write

\begin{equation}
z=\sum_{p=0}^{\infty}\sum_{I \in \mathcal{I}_{p}}^{}z_{I}{\zeta}^{I}
\end{equation}

\noindent where for $I \in \mathcal{I}_{k},$
\begin{equation}
{\zeta}^{I} = {\zeta}^{(i_{1},i_{2},\dots , i_{k})} = {\zeta}^{i_{1}}{\zeta}^{i_{2}}\dots {\zeta}^{i_{k}}.
\end{equation}

\begin{equation}
\begin{array}{l}
z_{I} = z_{i_{1},i_{2},\dots i_{k}}\in \K,  \ \ \ I \in \mathcal{I}_k, \ \ \ k \geq 1 \\ 
\end{array}
\end{equation}
\noindent and
\begin{equation}
\begin{array}{l}

z_{I} = z_{0} \in \K, \ \ \ I \in \mathcal{I}_{0}.
\end{array}
\end{equation}

Notice that, for convenience, wherever the null-index  $I \in \mathcal{I}_{0}$ appears as an index on an element it is denoted more simply by the label "0".

Now the set $\Lambda$ of all supernumbers is, by definition, the set of all formal power series
\begin{equation}
z=\sum_{p=0}^{\infty}\sum_{I \in \mathcal{I}_{p}}^{}z_{I}{\zeta}^{I}
\end{equation}
such that
$$||z||= \sum_{p=0}^{\infty}\sum_{I \in \mathcal{I}_{p}}^{}|z_{I}|$$
converges. Since the set $\mathcal{I}=\cup_{k=0}^{\infty}\mathcal{I}_{k}$ is countable the space $\Lambda$ can be identified, as a Banach space,  with the Banach space $\ell_1$ of  summable sequences of real or complex numbers. As is indicated by the notation $||\cdot ||$ is the norm on the space. It is well-known  \cite{Rsuperman} that  $\Lambda$ is in fact a Banach algebra and in this sense it has more structure than  $\ell_1.$

$\Lambda^0$ denotes the closed subspace of even elements $z=\sum_{|J|=even}z_{J}{\zeta}^{J}$ and $\Lambda^1$ denotes the closed subspace of odd elements $z=\sum_{|K|=odd }z_{K}{\zeta}^{K}$ of $\Lambda.$ As usual $\K^{p|q}$ denotes the Banach space $(\Lambda^0)^p \times (\Lambda^1)^q$ relative to the norm defined by
$$||(x^1,x^2,\cdots,x^p,\theta^1,\theta^2,\cdots, \theta^q)||=\sum_{i=1}^p||x^i||+\sum_{\alpha=1}^q||\theta^{\alpha}||.$$
We  follow the notation and conventions of Rogers \cite{R} regarding basic terminology. In particular superconjugation is defined as in Rogers \cite{R}. A supernumber $z$  over the field $\C$ is real if and only if $z^*=z$ and this is the case iff all the coefficients $z_I$ are in fact real. The supervector space $\R^{p|q }$ consists of supervectors in $(\Lambda^0)^p \times (\Lambda^1)^q$ whose components are real while
$\C^{p|q }$ consists of supervectors in $(\Lambda^0)^p \times (\Lambda^1)^q$ whose components are complex. 

All supermanifolds will be defined by atlases whose charts take their values in Banach spaces which are coordinated supervector spaces modeled on $\R^{p|q},$ for some $p,q.$\\

An exceptionally good paper dealing with the foundations of superanalysis is \cite{JP}. In this paper Jadczyk and Pilch show that a mapping is supersmooth (of class $G^{\infty}$)  if and only if it is superdifferentiable (of class $G^1$) and is smooth (of class $C^{\infty}).$ We utilize this equivalence throughout the paper often without specifically mentioning the fact. We refer to Rogers fundamental papers \cite{Rsuperman, RLie}  as well as her book \cite{R} for most conventions.

Finally, throughout Section 2 we allow $\K$ to be either the field of  real numbers or the field of complex
numbers, but in section 3 we restrict our attention to the real field only. Our super metric is super real number valued.

\section{Rogers Theorem Part I}
In this section we show how to generalize a Theorem of Rogers \cite{R} which shows how to construct
a simply connected super Lie group from a conventional super Lie algebra. Since our notation is slightly different from Rogers and since our super Lie algebras are $\Lambda$ modules where 
$\Lambda$ is the infinite-dimensional Banach algebra defined in the introduction we spell out our conventions in the next few paragraphs. 

\subsection{Conventional super Lie algebras with norm}
Let $\gfrak=\gz \oplus \gfrak^1$ denote a $\Z_2$ graded Banach Lie algebra over $\K$ of dimension $(m,n),$ i.e., we require that $m=dim_{\K} \  \gz,$ that $n=dim_{\K} \ \gfrak^1,$ and that there is a norm $|| \cdot ||_{\gfrak} $ on $\gfrak$ relative to which $\gfrak$ is a Banach space with the property that
$|| [X,Y] ||_{\gfrak} \leq || X ||_{\gfrak} || Y ||_{\gfrak}$ for all $X,Y\in \gfrak.$  We follow the conventions of Buchbinder and Kuzenko \cite{BB} for whom a super Lie algebra $\mfrak$ is a $\Z_2$ graded Lie algebra which is also a $\Lambda$ module. Such a super Lie algebra is "conventional" if there exists a $\Z_2$ graded Lie
algebra $\gfrak$ such that $\mfrak=\Lambda\otimes \gfrak.$

We wish to show that if $\ufrak=\Lambda \otimes \gfrak $ is a conventional super Lie algebra which is appropriately normed
then there is a simply connected super Lie group having Lie algebra $\ufrak.$ Rogers \cite{R} proves this theorem in the case that $\Lambda$ is a finitely generated Grassman algebra. We prove her result when $\Lambda$ has (countably) infinitely many generators. In this case $\Lambda$ is still a Banach Lie algebra, a fact which enables us to induce a norm on $\ufrak$ relative to which 
$\ufrak$ becomes a Banach super Lie algebra.

For simplicity of exposition we choose a basis $\{ X_i \}_{i=1}^{m+n}$ of $\gfrak$ once for all such that $\{ X_i \}_{i=1}^{m}$ is a basis of $\gz$ and $\{ X_i \}_{i=m+1}^{m+n}$ is a basis of $\gfrak^1.$
We define a norm on $\ufrak$ utilizing this basis but in the end we only need a norm having certain properties and the norm we define using this fixed basis serves to show that such norms exist having the desired properties.

Since $\ufrak$ is freely and finitely generated as a $\Lambda$ module we see that if $Y\in \ufrak,$  then $Y=\sum_{i=1}^{m+n} (y^i \otimes X_i) \equiv \sum_{i=1}^{m+n} y^i X_i$ for a unique sequence  $\{ y_i\}$ in $\Lambda.$ We define
$$|| Y||_{\Lambda \otimes \gfrak}=\sum_{i=1}^{m+n} ||y^i ||_{\Lambda}||X_i||_{\gfrak}.$$
It is easy to show that $|| \ ||_{\Lambda \otimes \gfrak}$ is indeed a norm on $\ufrak$ and that the space $\ufrak$ is complete with respect to this norm. Notice also that since $\Lambda$ is a Banach algebra, it follows that $||\lambda Y||_{\ufrak} \leq ||\lambda||_{\Lambda} ||Y ||_{\ufrak}$ for $\lambda \in \Lambda, Y\in \ufrak.$
Finally,  if $Y,Z\in \ufrak, Y=\sum_iy^iX_i,Z=\sum_jz^jX_j, $ then 
$$|| [Y,Z] ||_{\ufrak}=|| \sum_{i,j} y^iz^j(-1)^{ij}[X_i,X_j]||_{\ufrak}\leq \sum_{ij} ||(y^i z^j) [X_i,X_j] \  ||_{\ufrak} $$
 $$ \leq \sum_{ij} ||(y^i z^j) ||_{\Lambda}  \ || [X_i,X_j] ||_{\gfrak} \leq \sum_{ij} ||y^i ||_{\Lambda}
  \  ||z^j ||_{\Lambda} \  ||X_i ||_{\gfrak} \ ||X_j ||_{\gfrak}=||Y||_{\ufrak}  \ ||Z||_{\ufrak}.$$

Thus $\ufrak$ is a Banach Lie algebra. 
From this point on we drop the subscripts on the various norms since it should be clear from the context which norm is which.

Observe that the elements $Y$ of the Lie algebra $\ufrak$ may also be expressed in terms of
a Banach basis over $\K.$ One has that $y=\sum_iy^i X_i$ for some unique sequence  $\{y^i\}$ in $\Lambda$ but $\Lambda$ itself has a Banach basis in the sense that every element  
$y\in \Lambda$ has a series expansion $y=\sum_J y_J \zeta^J$ where the $\{\zeta^j \}$ are the generators of $\Lambda $ and where we have used multi-index notation: $J=(j_1,j_2,\cdots,j_p).$
Thus $Y\in \ufrak$ may be written as 
$$Y=\sum_i(\sum_J y^i_J \zeta^J)X_i=\sum_i\sum_J y^i_J (\zeta^J\otimes X_i)=
\sum_i\sum_J y^i_J  X^J_i$$
\noindent where $X^J_i$ is defined to be  $(\zeta^J \otimes X_i)\in \ufrak$ for each $i,J.$
Since $\zeta^{\emptyset}=1,  X^{\emptyset}_i=X_i$ for each $i.$
It follows that the set $\{X^J_i\}$ of elements of $\ufrak$ is a Banach basis of $\ufrak$ over $\K.$

It turns out to be useful to extend the body and soul mappings to apply to elements of $\ufrak.$
If $X=\sum_i\lambda^i X_i \in \ufrak,$ define $\beta(X)=\sum_i \beta(\lambda^i)X_i$ and $s(X)=\sum_is(\lambda^i)X_i.$ As usual, $X=\beta(X)+s(X).$ Notice that these definitions are independent of the
basis used to define them since any two bases $\{X_i\}$ and $\{Y_j\}$  of $\gfrak$ are related by 
$Y_i=\sum_jk_i^jX_j$ where the $k^j_i $ are ordinary numbers. Consequently 
$$\beta(\sum_i\mu^iY_i)=\beta(\sum_j(\sum_i k_i^j\mu^i)X_j)=\sum_j \sum_i\beta( k_i^j  \mu^i)X_j=
\sum_i\beta(\mu^i)Y_i.$$

\noindent Clearly, for $X,Y\in \ufrak, \beta(X+Y)=\beta(X)+\beta(Y)$ and if $\lambda\in \Lambda,$ then $\beta(\lambda X)=\beta(\lambda)\beta(X).$ While $\beta$ is not generally a Lie homomorphism 
its kernel as a linear mapping is a Lie ideal in $\ufrak.$ Indeed, if  $X=\sum_i\lambda^i X_i\in \ufrak$ and $Y=\sum_j\mu^j X_j\in ker(\beta),$ then $\beta(\mu^j)=0$ for all $j$ and 
$$\beta[X,Y]=\beta(\sum_{i,j}(\lambda^i \mu^j)(-1)^{ij}f_{ij}^kX_k)=
\sum_{i,j}(\beta(\lambda^i)(\beta( \mu^j))(-1)^{ij}f_{ij}^kX_k=0$$
where the $f^k_{ij}$ are the structure constants of the graded Lie algebra. Consequently, $ker(\beta)$
is an ideal and also a sub Lie algebra of $\ufrak.$ Notice that $||\beta(X)|| \leq ||X||$ and consequently
$\beta$ is continuous.  Define, once for all, $\nfrak$ to be the kernel of the mapping $\beta:\ufrak \rightarrow \ufrak.$ Since $\beta$ is continuous, $\nfrak$ is closed as a subspace of the Banach space 
$\ufrak$ and so is a closed ideal of $\ufrak.$

In our proof below we utilize the even subalgebra $\hfrak=
  (\Lambda^0 \otimes \gz) \oplus (\Lambda^1 \otimes \gfrak^1) $ of $\ufrak.$
Notice that when elements of $\hfrak$ are expanded in the basis over $\K$ one must restrict the multi-indices 
$J$ such that $|J|$ is even when $X_i$ is even and such that $|J|$ is odd when $X_i$ is odd.
Also notice that $|J|$ may be zero and elements of the body of $\hfrak$ may be written in the 
form $\sum_i\sum_{|J|=0} y^i_J  X^J_i=\sum_i y_{\emptyset}^i X_i.$

These facts enable us to utilize a portion of Rogers' proof almost word for word in our proof below.
In her proof the basis $\{X^J_i\}$ over $\K$ is finite and so is a basis in the usual sense for finite dimensional Lie algebras. In our case the basis $\{X^J_i\}$ is infinite but is a basis in the sense
of Banach. Our series converge absolutely and consequently we are able to use a part of her
argument without change.

\bigskip

Our argument, which follows that of Rogers, precedes as follows. Let $\widetilde G$ denote that unique 
finite dimensional simply connected Lie group having $\gz$ as Lie algebra. Now consider the even part 
$\hfrak=(\Lambda^0 \otimes \gz) \oplus (\Lambda^1 \otimes \gfrak^1)$ of $\ufrak.$ Split $\hfrak$
into two parts
 $$\hfrak= \beta( \hfrak) \oplus s( \hfrak)=
 \gz \oplus [(s (\Lambda^0)\otimes \gz )\oplus (\Lambda^1 \otimes \gfrak^1)]$$
  
\noindent and observe that $(s (\Lambda^0)\otimes \gz )\oplus (\Lambda^1 \otimes \gfrak^1)$ is 
precisely the kernel $\nfrak$  of the body mapping from $\hfrak$ onto $\gz$ and consequently is a closed Lie ideal in both $\hfrak$ and $\ufrak.$

Now $\widetilde G$ is the Lie group of $\gz$ and one seeks to find the Lie group of $\nfrak = [(s (\Lambda^0)\otimes \gz) \oplus (\Lambda^1 \otimes \gfrak^1)].$
In the finite dimensional case $\nfrak$ is nilpotent and its simply connected Lie group $N$  is a nilpotent Lie group. Rogers shows that there is an action of $\widetilde G$ on $N$ and that one can construct a semi-direct product $H$ of  $\widetilde G$ and $N$ which is simply connected, which has the structure of a super Lie group, and whose Lie algebra is $\hfrak.$ In our case the kernel $\nfrak$ fails to be nilpotent but we will show below that it is quasi-nilpotent in the sense defined by Wojtyn'ski \cite{W}. It turns out that there is  a simply connected infinite dimensional Banach Lie group $N$ with Lie algebra $\nfrak,$ and, in fact, $N$ and $\nfrak$ are the same as Banach spaces. We will show that the action of Rogers
is valid in our case and that we can construct a Banach Lie group $H$ which has all the required
properties.

In order to apply Wojtyn'ski's \cite{W} results we need a couple of Lemmas.

\begin{lemma} Assume that $X\in \nfrak$ and that $M$ is the matrix of  $ad_X:\ufrak \rightarrow \ufrak$
relative to a basis $\{X_i\}$ of $\gfrak.$ Then $\beta(M)=0.$
\end{lemma}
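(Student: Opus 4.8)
The plan is a short direct computation in the fixed basis $\{X_i\}_{i=1}^{m+n}$ of $\gfrak$. Since that basis is a free $\Lambda$-basis of $\ufrak=\Lambda\otimes\gfrak$, the matrix $M=(M^i_j)$ of $ad_X$ is read off from $[X,X_j]=\sum_i M^i_j X_i$ with $M^i_j\in\Lambda$. Write $X=\sum_{k=1}^{m+n}\lambda^k X_k$, $\lambda^k\in\Lambda$. By the definition of the extended body map on $\ufrak$, the hypothesis $X\in\nfrak=\ker(\beta:\ufrak\to\ufrak)$ is equivalent to $\beta(\lambda^k)=0$ for every $k$; that is, each coordinate $\lambda^k$ of $X$ is pure soul.

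Next I would record the bracket relations $[X_k,X_j]=\sum_i f^i_{kj}X_i$, where the $f^i_{kj}\in\K$ are the structure constants of $\gfrak$. Expanding by $\K$-bilinearity of the bracket, and noting that pulling $\lambda^k$ out of $[\lambda^k X_k,X_j]$ costs no grading sign because the $\Lambda$-scalar of $X_j=1\otimes X_j$ is even, one gets
\[
ad_X X_j=\sum_k\lambda^k[X_k,X_j]=\sum_i\Bigl(\sum_k f^i_{kj}\lambda^k\Bigr)X_i ,
\]
so $M^i_j=\sum_k f^i_{kj}\lambda^k$. Since $\beta:\Lambda\to\K$ is $\K$-linear and the $k$-sum is finite, $\beta(M^i_j)=\sum_k f^i_{kj}\,\beta(\lambda^k)=0$ for all $i,j$, i.e. $\beta(M)=0$.

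No basis-independence argument is required, since the lemma is stated for the fixed basis chosen above; independence would in any case follow just as in the earlier verification that $\beta$ on $\ufrak$ does not depend on the basis. The only point demanding a little attention is the grading sign produced when a scalar is commuted past a homogeneous element in the bracket, but in the computation above that sign is trivial (and for homogeneous $X$ it would in general be only a global $\pm1$, which cannot change whether $\beta(M^i_j)$ vanishes). I do not expect any genuine obstacle here: the content of the lemma is precisely that the entries of the matrix of $ad_X$ are $\K$-linear combinations of the soul-valued coordinates of $X$, hence are themselves bodiless.
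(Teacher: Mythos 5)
Your proof is correct and follows essentially the same route as the paper's: write $X=\sum_k\lambda^kX_k$ with each $\lambda^k$ bodiless, use the structure constants to express the matrix entries as $M^i_j=\sum_k f^i_{kj}\lambda^k$, and conclude by the linearity of $\beta$. The only cosmetic difference is that you isolate $M^i_j$ explicitly and then apply $\beta$, whereas the paper applies $\beta$ to $ad_X(X_i)$ and reads off the vanishing two ways; the content is identical.
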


\begin{proof} Write $X=\sum_i\lambda^iX_i, $ then $0=\beta(X)=\sum_i\beta(\lambda^i)X_i$ and
consequently, $\beta(\lambda^i)=0$ for all $i.$ Now
$$\beta(ad_X(X_i))=\beta[X,X_i]=\beta(\sum_j \lambda^j[X_j,X_i])=\sum_j\sum_k\beta(\lambda^jf_{ji}^k)X_k=0$$
where the  $\{f_{ij}^k\}$ are the structure constants of $\gfrak.$ We also have $0=\beta(ad_X(X_i))=
\beta(\sum_j M_i^jX_j)=\sum \beta(M_i^j)X_j.$ Consequently $\beta(M)=(\beta(M_i^j))=0.$
\end{proof}

Notice that for $Y\in \nfrak,$ $Y=\sum_{a=1}^m x^a X_a+\sum_{\alpha=1}^{n}\mu^{\alpha}X_{m+\alpha}$ where the supernumbers $x^a$ are even, the supernumbers $\mu^{\alpha}$ are odd, and $ \beta(x^a)=0.$
Moreover $Y$ is uniquely determined by the vector $(x^1,x^2,\cdots,x^m,\mu^1,\cdots,\mu^n)\in \widetilde \K^{m|n}$ where $\widetilde \K^{m|n}$ is the set of vectors in $ \K^{m|n}$ with zero bodies.
In the next lemma we denote the vector 
$(x^1,x^2,\cdots,x^m,\mu^1,\cdots,\mu^n)\in \widetilde \K^{m|n}$ which determines $Y\in \nfrak $ by $\vec Y.$

In the next Lemma we abuse notation by writing $M^a_{\alpha} , M^{\beta}_{\alpha}$ instead of
$M^a_{\alpha +m} , M^{\beta+m}_{\alpha +m}.$

\begin{lemma}  \label{L:spectrum} Let $X\in \nfrak$ and let $M$ denote the matrix of $ad_X: \ufrak \rightarrow \ufrak $
relative to the basis $\{X_i\}$ of $\gfrak.$ The matrix $M$ also represents the linear mapping 
$ad_X$ as a mapping from $\nfrak$ into itself in the sense that if $Y\in \nfrak$ and 
$\vec Y=(x^1,x^2,\cdots,x^m,\mu^1,\cdots,\mu^n)$ then $ Z=ad_X(Y)$ is represented by 
$\vec Z=(y^1,y^2,\cdots,y^m,\lambda^1,\cdots, \lambda^n)$ where
$$y^b=\sum_ax^aM_a^b+\sum_{\alpha}(-1)^{\alpha  X} \mu^{\alpha} M_{\alpha}^b$$ 
$$\lambda^{\beta}=\sum_ax^aM_a^{\beta}+\sum_{\alpha}(-1)^{\alpha  X} \mu^{\alpha} M_{\alpha}^{\beta}.$$ 
Define a matrix $M_{\nfrak}$ by

     \[M_{\nfrak}= \left  ( \begin{array}{ccc}
                                           M^b_a     &   (-1)^{\alpha X}M^b_{\alpha} \\
                                                  
                                          M_a^{\beta}    &    (-1)^{\alpha X}M_{\alpha}^{\beta}
                                         
                                          \end{array} \right ).   \]

\noindent then it follows that $Z=ad_X(Y)$ is equivalent to the matrix equation
$\vec Z^T =M_{\nfrak}\vec Y^T.$     Consequently,  for each $\xi\in \K, \xi I_{\nfrak} - ad_X $
is invertible iff $\xi I_{\widetilde \K^{m|n}} - M_{\nfrak} $ is invertible. Moreover the latter
is invertible iff $\xi \neq 0.$                                  
\end{lemma}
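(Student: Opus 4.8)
The plan is to reduce the statement, via the coordinate parametrization $Y\leftrightarrow\vec Y$, to an elementary spectral computation for a finite matrix all of whose entries are souls. First I would establish the matrix description of $ad_X$ on $\nfrak$. Fix $X\in\nfrak$; since $\nfrak\subseteq\hfrak$, $X$ is an even element of $\ufrak$. For $Y=\sum_i\eta^iX_i\in\nfrak$ one expands the graded bracket $ad_X(Y)=[X,Y]$ bilinearly, reorders the $\Lambda$-coefficients using supercommutativity of $\Lambda$, and substitutes $[X,X_i]=\sum_jM_i^jX_j$; writing $\vec Y=(x^1,\dots,x^m,\mu^1,\dots,\mu^n)$ and using the abbreviations $M^b_\alpha=M^b_{m+\alpha}$, $M^{\beta}_\alpha=M^{\beta+m}_{m+\alpha}$, the components of $ad_X(Y)$ come out as exactly the displayed expressions for $\vec Z$, the signs $(-1)^{\alpha X}$ being precisely the super-signs that the bracket and the reordering produce. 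Since $\nfrak$ is a closed ideal of $\ufrak$ one has $ad_X(Y)\in\nfrak$, so $\vec Z\in\widetilde\K^{m|n}$, and the displayed identities are, by definition of $M_\nfrak$, the matrix equation $\vec Z^T=M_\nfrak\vec Y^T$.

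Next I would transport the invertibility question across this identification. The coordinate map $\Phi\colon\nfrak\to\widetilde\K^{m|n}$, $Y\mapsto\vec Y$, is a $\K$-linear bijection that is bi-Lipschitz (the norms $\|\vec Y\|$ and $\|Y\|_\ufrak=\sum_i\|\eta^i\|\,\|X_i\|$ differ only by the fixed factors $\min_i\|X_i\|$ and $\max_i\|X_i\|$), and the preceding paragraph gives $\Phi\circ ad_X=M_\nfrak\circ\Phi$, i.e. $\xi I_\nfrak-ad_X=\Phi^{-1}(\xi I_{\widetilde\K^{m|n}}-M_\nfrak)\Phi$. Since conjugate bounded operators are simultaneously invertible, $\xi I_\nfrak-ad_X$ is invertible if and only if $\xi I_{\widetilde\K^{m|n}}-M_\nfrak$ is.

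For the last assertion I would start from the previous Lemma, which gives $\beta(M)=0$; hence every entry of $M$, and therefore every entry of $M_\nfrak$ (each a $\pm1$ multiple of an entry of $M$), lies in the soul ideal $s(\Lambda)$. The crucial input is the defining feature of Rogers' $\ell_1$-completed $\Lambda$: every soul is topologically nilpotent, $\|a^k\|^{1/k}\to0$ for $a\in s(\Lambda)$. (One way to see this: $\Lambda^0$ is commutative and central in $\Lambda$ with $\beta$ as its only character, so even souls have vanishing spectral radius; an odd soul $a$ has $a^2\in s(\Lambda^0)$; and a general soul splits as a central topologically nilpotent element plus a topologically nilpotent one. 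If such a statement is already recorded or cited, it can simply be quoted.) Consequently $s(\Lambda)\subseteq\mathrm{rad}\,\Lambda$, so $M_\nfrak\in M_{m+n}(s(\Lambda))\subseteq M_{m+n}(\mathrm{rad}\,\Lambda)=\mathrm{rad}\,M_{m+n}(\Lambda)$ and $M_\nfrak$ is topologically nilpotent, and so is its restriction to the closed, $M_\nfrak$-invariant subspace $\widetilde\K^{m|n}$. It follows that for $\xi\neq0$ the Neumann series $\xi^{-1}\sum_{k\ge0}(\xi^{-1}M_\nfrak)^k$ converges to a bounded inverse of $\xi I-M_\nfrak$, which, since $M_\nfrak$ carries $\K^{m|n}$ into $\widetilde\K^{m|n}$, preserves $\widetilde\K^{m|n}$; hence $\xi I_{\widetilde\K^{m|n}}-M_\nfrak$ is invertible. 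For $\xi=0$, the spectrum of $M_\nfrak|_{\widetilde\K^{m|n}}$ equals $\{0\}$, so $M_\nfrak$ is not invertible on $\widetilde\K^{m|n}$. Together with the preceding paragraph, this is the full statement.

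The step I expect to be the main obstacle is exactly the one at which the infinite-generator, $\ell_1$-completed setting genuinely departs from Rogers' finite-generator setting — where souls are honestly nilpotent, $M_\nfrak$ is a nilpotent matrix, and the spectral claim is immediate: namely, establishing the topological nilpotency of the souls of $\Lambda$ (equivalently, that $\beta$ is the only multiplicative functional on $\Lambda$), together with its propagation to finite matrices with soul entries. The remaining care is pure bookkeeping: tracking the super-signs in the bracket expansion of the first paragraph, and the routine transfer of invertibility between bounded operators on $\widetilde\K^{m|n}$ and matrices over $\Lambda$.
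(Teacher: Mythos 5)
Your proof is correct and follows the paper's own route: the coordinate reduction to the matrix $M_{\nfrak}$, the transfer of invertibility across the identification with $\widetilde{\K}^{m|n}$, and the use of $\beta(M_{\nfrak})=0$ from the preceding lemma. The one place you diverge is the last step: where the paper simply invokes as known the principle that a matrix over $\Lambda$ is invertible iff its body is, you re-derive that principle from the quasi-nilpotency of souls (Gelfand theory on the commutative Banach algebra $\Lambda^0$, squaring for odd souls, and the Jacobson radical of the matrix algebra) --- a worthwhile unpacking, since this is exactly where the passage from a finitely generated Grassmann algebra, whose souls are honestly nilpotent, to the $\ell_1$-completed one requires justification, but it is a proof of the same underlying principle rather than a different argument.
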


\begin{proof} The two equations in the statement of the lemma follow immediately by writing 
$$Y=\sum_{a=1}^m x^a X_a+\sum_{\alpha=1}^{n}\mu^{\alpha}X_{m+\alpha}, $$
$$ ad_X(X_b)=\sum_{a=1}^m M_b^aX_a+\sum_{\alpha=1}^nM_b^{\alpha}X_{\alpha+m}, $$
$$ad_X(X_{\beta+m})=\sum_{a=1}^m M_{\beta}^aX_a+\sum_{\alpha=1}^nM_{\beta}^{\alpha}X_{m+\alpha},$$ and then by computing $Z=ad_X(Y).$ The matrix formula $\vec Z^T =M_{\nfrak}\vec Y^T$
is an immediate consequence of the two equations. The invertibility of $\xi  I_{\nfrak} -ad_X$ is clearly 
equivalent to invertibility of its matrix  $\xi  I_{\widetilde \K^{m|n}} - M_{\nfrak} .$ The body of the matrix
$\xi  I_{\widetilde \K^{m|n}} - M_{\nfrak} $ is $\xi  I_{\widetilde \K^{m|n}}$  since, by the preceding lemma,
the body of $M$ is zero and therefore so is the body of $M_{\nfrak}.$ But a matrix is invertible iff its body is invertible. Consequently, $\xi  I_{\widetilde \K^{m|n}} - M_{\nfrak} $ is invertible iff $\xi \neq 0.$
\end{proof}

\subsection{Baker-Campbell-Hausdorff and quasi-nilpotent Lie algebras} 

According to Wojtyn'ski \cite{W} it is known that for a Banach Lie group $\widetilde G$ the group multiplication in exponential coordinates is locally described by the Baker-Campbell-Hausdorff series. 
Stating this result in more detail, let $\gfrak$ denote the Lie algebra of $\widetilde G$ and  define $\Theta = W\circ Z$
where $W$ and $Z$ are the formal series defined by

$$W(z)=log(1+z)=\sum_{n=1}^{\infty}(-1)^{n+1}\frac{z^n}{n}$$ 
and 
$$Z(x,y)=e^xe^y-1=\sum_{j+k\geq 1}\frac{x^j}{j!}\frac{y^k}{k!}.$$

\noindent Then $\Theta $ is, in fact, the B-C-H series,
$$\Theta(x,y)=\sum_{m=1}^{\infty} \Theta_m(x,y)$$
where $\Theta_m(x,y)$ is the finite sum of all homogeneous terms of order $m.$
If one defines an operation $\diamond$ on an appropriate open subset $U$ of $\gfrak$ by $X\diamond Y= \Theta (X,Y),$ for $ X,Y \in U,$  then $U$ is a local  Banach Lie group relative to this
operation. Moreover if $U$ is chosen appropriately then the exponential maps $U$ bijectively
onto an open subset of $\widetilde G$ containing the identity of $\widetilde G$ and
$$exp(X)exp(Y)=exp(X \diamond Y).$$
Thus the exponential is a local Lie group isomorphism.
It is pointed out by  Wojtyn'ski \cite{W}  that the following proposition is well-known:

\begin{prop}\label{P:LocalLie} If $\gfrak$ is a Banach Lie algebra which is normed in such a way that
$||[a,b]||\leq ||a|| ||b||,$ for all $a,b \in \gfrak$ then the B-C-H series converges for all pairs $(a,b)\in \gfrak \times \gfrak$ such that 
$||a|| +||b|| \leq ln(2).$ It follows that for t sufficiently small,
$$exp(ta)exp(tb)=exp(t(a+b)+\frac{1}{2} t^2 [a,b]+ O(t^3)).$$

\end{prop}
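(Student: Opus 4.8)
The plan is to reduce both assertions to a single explicit scalar majorant for the homogeneous pieces $\Theta_m$ of the Baker--Campbell--Hausdorff series.

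First I would pass from the purely formal identity $\Theta=W\circ Z$ to Dynkin's explicit form: for each $m\ge1$, $\Theta_m(a,b)$ is a $\K$--linear combination of nested commutators built from $m$ letters, each letter being $a$ or $b$; when such a commutator is grouped into consecutive blocks of $p_1$ copies of $a$, then $q_1$ copies of $b$, \dots, then $p_n$ copies of $a$, $q_n$ copies of $b$ (with $\sum_{i=1}^n(p_i+q_i)=m$), the absolute value of its coefficient is $\dfrac{1}{n\,m\,p_1!q_1!\cdots p_n!q_n!}$. Iterating the hypothesis $\|[a,b]\|\le\|a\|\,\|b\|$ shows that every nested commutator in $a$ and $b$ has norm at most the product of the norms of its letters, hence at most $\|a\|^{\,p_1+\cdots+p_n}\|b\|^{\,q_1+\cdots+q_n}$. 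Summing these bounds over all admissible block patterns of fixed total length $m$, and writing $\sigma:=\|a\|+\|b\|$, I would arrive at
$$\|\Theta_m(a,b)\|\ \le\ \frac{d_m}{m},\qquad\text{where}\qquad \sum_{m\ge1}d_m\,t^m\ =\ \sum_{n\ge1}\frac1n\bigl(e^{\sigma t}-1\bigr)^n\ =\ -\log\bigl(2-e^{\sigma t}\bigr),$$
since $\sum_{p+q\ge1}\tfrac{(\|a\|t)^p(\|b\|t)^q}{p!\,q!}=e^{\sigma t}-1$ accounts for one block and raising this to the $n$--th power and extracting the coefficient of $t^m$ accounts for the $n$ blocks, the weights $1/n$ and $1/m$ surviving verbatim.

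Next I would harvest the two assertions from this majorant. The scalar function $-\log(2-e^{\sigma t})$ is analytic in $t$ on the disc of radius $\ln2/\sigma$ and, at the boundary point $t=\ln2/\sigma$, has only a logarithmic singularity; consequently the coefficients $d_m=d_m(\sigma)$ are nondecreasing in $\sigma\ge0$ and satisfy $d_m(\ln2)=O(1/m)$, so that $\sum_m d_m(\sigma)/m<\infty$ for every $\sigma\le\ln2$. This is precisely absolute convergence of $\sum_m\Theta_m(a,b)$ throughout the closed region $\|a\|+\|b\|\le\ln2$, which is the first claim; it is the reciprocal--length factor $1/m$ in Dynkin's coefficients that upgrades the conclusion from the open ball to the closed ball. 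For the displayed consequence, whenever $|t|$ is small enough that $ta$ and $tb$ lie in the neighbourhood $U$ of $0$ on which $\exp$ is the local group isomorphism recalled above, one has $\exp(ta)\exp(tb)=\exp\bigl(\Theta(ta,tb)\bigr)$ with $\Theta(ta,tb)=\sum_{m\ge1}t^m\Theta_m(a,b)$; the classical low--order identities $\Theta_1(a,b)=a+b$ and $\Theta_2(a,b)=\tfrac12[a,b]$ supply the first two terms, while $\bigl\|\sum_{m\ge3}t^m\Theta_m(a,b)\bigr\|\le\sum_{m\ge3}|t|^m d_m(\sigma)/m=O(t^3)$ as $t\to0$ by the majorant. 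Substituting gives $\exp(ta)\exp(tb)=\exp\bigl(t(a+b)+\tfrac12t^2[a,b]+O(t^3)\bigr)$.

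I expect the one genuinely delicate step to be the sharp constant, i.e.\ obtaining convergence on the \emph{closed} set $\{\|a\|+\|b\|\le\ln2\}$ rather than merely its interior; this is what forces me to retain the degree--reciprocal weights of Dynkin's formula rather than bounding $W\circ Z$ crudely monomial by monomial. An alternative that avoids Dynkin's closed form is to run the estimate through the integral (continuous) form of B--C--H and then observe that the scalar integrand produced there has only an integrable logarithmic singularity at the endpoint of the $t$--integration; either route gives the boundary case. Everything else --- the submultiplicative bounds, the power--series bookkeeping, and the standard fact that $\Theta$ is genuinely a Lie series so that the partial sums lie in $\gfrak$ --- is routine.
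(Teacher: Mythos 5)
The paper offers no proof of this proposition: it is explicitly presented as a fact that Wojtyński notes is ``well-known,'' and the text proceeds directly to the next remark. Your task was therefore to supply what the paper leaves implicit, and you have done so along the standard lines.

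Your derivation via Dynkin's explicit form of the B--C--H series is correct. The submultiplicativity hypothesis $\|[a,b]\|\le\|a\|\,\|b\|$ does yield $\|\Theta_m(a,b)\|\le d_m/m$ with $\sum d_m t^m=-\log(2-e^{\sigma t})$, $\sigma=\|a\|+\|b\|$, and at $\sigma=\ln 2$ one has $d_m=\frac{(\ln2)^m}{m!}\sum_{k\ge1}k^{m-1}2^{-k}$, so that after exchanging sums $\sum_{m\ge1}d_m/m=\sum_{k\ge1}\frac{2^{-k}}{k}\sum_{m\ge1}\frac{(k\ln2)^m}{m\cdot m!}$ converges (the inner sum grows like $2^k/(k\ln2)$, giving an overall $\sum_k 1/k^2$). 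This confirms your central point that the reciprocal-degree factor in Dynkin's coefficients is exactly what upgrades the region of convergence from the open to the closed set $\{\|a\|+\|b\|\le\ln 2\}$, whereas the crude majorant $\sum d_m$ obtained from $-\log(2-e^{\sigma})$ at $\sigma=\ln 2$ diverges. The concluding low-order identity $\Theta_1=a+b$, $\Theta_2=\tfrac12[a,b]$ and the $O(t^3)$ tail estimate via the same majorant for $|t|$ small are also correct, and the passage from convergence of the series to the group identity $\exp(ta)\exp(tb)=\exp(\Theta(ta,tb))$ is legitimate in the range where $\exp$ is a local diffeomorphism. In short: the paper cites the result, you proved it, and your proof is a faithful rendering of the classical Dynkin-majorant argument that underlies Wojtyński's citation.

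One caveat worth flagging if you intend your write-up to stand alone: you invoke ``the classical low-order identities $\Theta_1=a+b$ and $\Theta_2=\tfrac12[a,b]$'' and ``the standard fact that $\Theta$ is genuinely a Lie series.'' The second is the Friedrichs/Dynkin--Specht--Wever theorem and is nontrivial; since your whole estimate is built from Dynkin's closed form, it would be cleaner to observe that the Lie-series property is not an additional hypothesis but is built in once you adopt Dynkin's formula as your definition of $\Theta_m$, and that only the equality $\exp a\exp b=\exp(\Theta(a,b))$ (in a neighbourhood of $0$) needs a separate citation.
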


\bigskip

Wojtyn'ski \cite{W} finds necessary and sufficient conditions that the B-C-H series converges  globally
and in fact shows that this is the case if and only if the Banach Lie algebra is quasi-nilpotent, i.e.,
if and only if the endomorphism $ad_a:\gfrak \rightarrow \gfrak$ has zero spectrum for each $a\in \gfrak.$
In this case the operation $\diamond$ is globally defined and $\gfrak$ is a Banach Lie group relative 
to this operation. It is the unique simply connected Lie group with Lie algebra $\gfrak.$ In this  case
the exponential is identified with the identity mapping from the Lie algebra $\gfrak$ onto the Lie group $\widetilde G=\gfrak.$

\bigskip
\bigskip
\begin{lemma} Assume that $M$ is a manifold modeled on $\K^m,$ that $U$ is an open
subset of $M$ and that $\phi$ is a $C^{\infty} $ chart of $M$ defined on $U.$ Let $N$ be a manifold modeled on the Banach space  $\widetilde{\K^{m|n}}$ with a globally defined $C^{\infty}$ chart $\eta$ which we identify with the identity mapping (thus we identify $N$ with $\widetilde{\K^{m|n}}$).
If we identify   $\K^{m|n}$ with $\K^m \times \widetilde{ \K^{m|n}}$ via the mapping 
$$(z^1,\cdots, z^{m+n}) \mapsto 
((\beta(z^1),\cdots,\beta(z^m)),(s(z^1),\cdots ,s(z^m),z^{m+1}, \cdots,z^{m+n})),$$
where $\beta$ is the body mapping and $s$ is the soul mapping,
then the differential $d\psi$ of the mapping $\psi$ from $U\times N$ onto 
$\phi(U)\times \widetilde{ \K^{m|n}}$ defined by $\psi=\phi \times \eta$ is a $\Lambda^0$ right-linear mapping from $\K^{m|n}$ onto itself.
\end{lemma}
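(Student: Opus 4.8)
The plan is to compute $d\psi$ explicitly as a block map with respect to the splitting $\K^{m|n}=\K^{m}\times\widetilde{\K^{m|n}}$ supplied in the statement, and then to check $\Lambda^{0}$-right-linearity one block at a time. Since $\psi=\phi\times\eta$, at any point $(p,\nu)\in U\times N$ the tangent map factors as $d_{(p,\nu)}\psi=d_{p}\phi\times d_{\nu}\eta$. Because $\eta$ is identified with the identity chart of $N=\widetilde{\K^{m|n}}$, the second factor is simply the identity endomorphism of $\widetilde{\K^{m|n}}$. The first factor $d_{p}\phi$ is a $\K$-linear automorphism of $\K^{m}$, i.e.\ a matrix with entries in $\K$, and under the identification it acts on the $m$ even coordinate slots of $\K^{m|n}$. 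So the whole assertion reduces to two claims: (a) the identity map of $\widetilde{\K^{m|n}}$ is $\Lambda^{0}$-right-linear, and (b) left multiplication by a $\K$-valued matrix on the even coordinate block is $\Lambda^{0}$-right-linear.

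First I would record the two elementary facts that drive (a) and (b). For (a): $\widetilde{\K^{m|n}}$ is a $\Lambda^{0}$-submodule of $\K^{m|n}$, since the body map is an algebra homomorphism and $\beta(\lambda v)=\beta(\lambda)\beta(v)$ vanishes whenever $\beta(v)=0$; the identity map of a $\Lambda^{0}$-submodule is obviously $\Lambda^{0}$-right-linear. For (b): $\Lambda^{0}$ is a commutative algebra — in fact the center of $\Lambda$, because transposing an even element past a generator incurs an even number of sign changes — so for a matrix $A$ with entries in $\K\subseteq\Lambda^{0}$ one has $A(z\lambda)=(Az)\lambda$ for every $\lambda\in\Lambda^{0}$, i.e.\ multiplication by $A$ is $\Lambda^{0}$-right-linear (indeed $\Lambda^{0}$-bilinear). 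Combining, $d_{(p,\nu)}\psi$ is a direct sum of the maps in (a) and (b), hence $\Lambda^{0}$-right-linear as an endomorphism of $\K^{m|n}$. For surjectivity onto $\K^{m|n}$ I would exhibit the inverse directly: $\phi$ is a chart, so $d_{p}\phi$ is invertible, and $(d_{(p,\nu)}\psi)^{-1}$ is the direct sum of left multiplication by $(d_{p}\phi)^{-1}$ and the identity of $\widetilde{\K^{m|n}}$; this inverse is $\Lambda^{0}$-right-linear by the same two facts, so $d_{(p,\nu)}\psi$ is a $\Lambda^{0}$-right-linear bijection of $\K^{m|n}$ onto itself.

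The delicate point — and the one I expect to be the main obstacle — is the bookkeeping of the identification $\K^{m|n}\cong\K^{m}\times\widetilde{\K^{m|n}}$ when one passes to tangent spaces: one must verify that $d\psi$, read off through this identification, acts on the full $\Lambda^{0}$-valued even coordinates by the (body) Jacobian of $\phi$ and by the identity on the soul-of-even coordinates together with the odd coordinates, so that the splitting $\K^{m|n}=\K^{m}\oplus\widetilde{\K^{m|n}}$ is routed through $d\psi$ in exactly the way that makes (a) and (b) applicable; conflating this with $d\phi$ acting only on the body parts would break $\Lambda^{0}$-linearity, so the identification must be handled with care. Everything else is routine. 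The intended payoff is then immediate: $\psi$ is $C^{\infty}$ and, by this lemma, has everywhere $\Lambda^{0}$-right-linear differential, so the Jadczyk--Pilch equivalence ($G^{\infty}=G^{1}\cap C^{\infty}$, with $G^{1}$ detected by the differential) shows $\psi$ is a $G^{\infty}$ chart, which is what is needed to assemble the $\psi$-charts into a $G^{\infty}$ atlas on $H$.
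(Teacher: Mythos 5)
Your approach is essentially the same as the paper's: write $d\psi=d\phi\times d\eta$ with $d\eta=\mathrm{Id}$, then verify $\Lambda^0$-linearity block by block. The two elementary facts you isolate — that $\widetilde{\K^{m|n}}$ is a $\Lambda^0$-submodule, and that ordinary $\K$-matrices commute with $\Lambda^0$-multiplication because $\Lambda^0$ is central — are both used, implicitly, in the paper's computation. However, there is a real gap: you name the ``delicate point'' (how the $\Lambda^0$-action on $\K^{m|n}$ is routed through the identification $\K^{m|n}\cong\K^m\times\widetilde{\K^{m|n}}$) and correctly warn that mishandling it would break $\Lambda^0$-linearity, but you never actually carry out the bookkeeping. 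This is precisely the step the paper's proof does perform: it writes out, coordinate by coordinate, how multiplication by $a\in\Lambda^0$ acts on the image of the identification — $\beta(a)$ on the $\K^m$ block and the map $(s(z^1),\dots,s(z^m),z^{m+1},\dots)\mapsto(s(az^1),\dots,s(az^m),az^{m+1},\dots)$ on the $\widetilde{\K^{m|n}}$ block — and then pushes $a$ through $d\phi\times d\eta$ one factor at a time. In other words, the paper's proof \emph{is} your ``delicate point'' paragraph, spelled out.

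Beyond the omission, your item (b) is stated a bit loosely. You describe $d_p\phi$ as acting ``on the $m$ even coordinate slots of $\K^{m|n}$,'' and your verification checks that a $\K$-matrix $A$ satisfies $A(z\lambda)=(Az)\lambda$ for $z\in(\Lambda^0)^m$. But under the identification given in the statement, $d_p\phi$ does \emph{not} act on the full $\Lambda^0$-valued even coordinates; it acts on the $\K^m$ factor — the bodies only — with the souls of those coordinates passing untouched through the second (identity) factor. So the relevant compatibility is between $d_p\phi$ and multiplication by $\beta(a)\in\K$ (not by $a\in\Lambda^0$ on a $(\Lambda^0)^m$ block), coupled with how the souls are carried along, and it is exactly this coupling that the paper tracks explicitly. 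You should make that coupling precise — write out the action of $a$ on the split coordinates, observe that its effect on the $\K^m$ factor is $w\mapsto\beta(a)w$, then pull $\beta(a)$ through $d\phi$ by ordinary $\K$-linearity and reassemble — to turn your outline into a complete proof.
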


\begin{proof} If $a\in \Lambda^0,$ then under the identification defined in the statement
of the lemma $a(z^1,\cdots, z^{m+n})=(az^1,\cdots, az^{m+n})$ is identified with
$$((\beta(az^1),\cdots,\beta(az^m)),(s(az^1),\cdots ,s(az^m),az^{m+1}, \cdots,az^{m+n})).$$
So the action on the factor $\K^m$ becomes $$a((\beta(z^1),\cdots,\beta(z^m))=\beta(a)(\beta(z^1),\cdots, \beta(z^{m}))$$
while on the factor  $\widetilde{ \K^{m|n}}$
$$a(s(z^1),\cdots ,s(z^m),z^{m+1}, \cdots,z^{m+n}))=(s(az^1),\cdots ,s(az^m),az^{m+1}, \cdots,az^{m+n})).$$
Now $d\psi=d\phi \times d\eta=\phi \times Id$ acts on $a(z^1,\cdots, z^{m+n})$ to give
$$(d\phi(\beta(a)(\beta(z^1),\cdots,\beta(z^m)),d\eta(s(az^1),\cdots ,s(az^m),az^{m+1}, \cdots,az^{m+n}))=$$
$$(\beta(a)d\phi((\beta(z^1),\cdots,\beta(z^m)),(s(az^1),\cdots ,s(az^m),az^{m+1}, \cdots,az^{m+n}))=$$
$$(\beta(a)d\phi((\beta(z^1),\cdots,\beta(z^m)),a(s(z^1),\cdots ,s(z^m),z^{m+1}, \cdots,z^{m+n}))=$$
$$a(d\phi((\beta(z^1),\cdots,\beta(z^m)),(s(z^1),\cdots ,s(z^m),z^{m+1}, \cdots,z^{m+n}))=$$
$$a(d\phi \times d\eta)((\beta(z^1),\cdots,\beta(z^m)),(s(z^1),\cdots ,s(z^m),z^{m+1}, \cdots,z^{m+n}),$$
\noindent which is precisely $a \ d\psi(z^1,z^2,\cdots,z^{m+n}).$
It follows that $d\psi(a(z^1,\cdots, z^{m+n})=a \ d\psi(z^1,z^2,\cdots,z^{m+n})$ as required.
\end{proof}

\subsection{Rogers' Theorem Part  II}

Recall from Rogers' Theorem Part I, that  $\gfrak=\gz \oplus \gfrak^1$ denotes a $\Z_2$ graded Banach Lie algebra over $\K$ of dimension $(m,n),$ that $\ufrak=\Lambda\otimes \gfrak,$ and that $\hfrak=(\Lambda^0 \otimes \gz) \oplus (\Lambda^1 \otimes \gfrak^1)$ is the even part of $\ufrak.$

Let $\{e_a\}$ denote a basis of $\gz$ and $\{f_{\alpha}\}$ a basis of $\gfrak^1.$
Elements of $\hfrak$ take the form $\sum_a \lambda_a e_a+\sum \mu_{\alpha}f_{\alpha}$
where $\lambda_a\in \Lambda^0$ for each $a$ and $\mu_{\alpha}\in \Lambda^1$ for each $\alpha,$ while elements of $\nfrak$ require the additional condition that the body of $\lambda_a$ be zero for each a.  Thus there is bijection from $\nfrak$ onto the Banach space $\widetilde{\K^{m|n}}$ where 
$\widetilde{\K^{m|n}}$ is the kernel of the body mapping from $\K^{m|n}$ onto $\K^m$
defined by $(\lambda_a,\mu_{\alpha}) \rightarrow (body( \lambda_a)).$ (Recall that 
$\Lambda $ is a Banach algebra and that $\K^{m|n}$ is a Banach space with norm defined by 
$||(\lambda_a,\mu_{\alpha})||=\sum_a ||\lambda_a ||_{\Lambda}+\sum_{\alpha} ||\mu_{\alpha}||_{\Lambda}.$)

Observe that we can choose the bases $\{e_a\}$ and $\{f_{\alpha} \}$ such that 
$||e_a||=1=||f_{\alpha}||$ and if we do this then the bijection from $\nfrak$ onto $\widetilde \K^{m|n}$
referred to above is a norm preserving bijection with a norm preserving inverse.

Also note that 
if $X\in \nfrak, $ then the mapping $ad_X:\nfrak \rightarrow \nfrak$ defined by 
$ad_X(Y)=[X,Y]$ is a $\Lambda^0$ linear even endomorphism from $\nfrak$ to $\nfrak.$
It follows from Lemma ~\ref{L:spectrum} that $\lambda \ I_{\nfrak} - ad_X $ is invertible iff $\lambda \neq 0.$

Consequently, the spectrum of $ad_X$ is zero for each $X\in \nfrak$
and so $\nfrak$ is quasi-nilpotent in the sense defined by Wojtyn'ski \cite{W}.

Now according to Wojtyn'ski it follows that the global Baker-Campbell-Hausdorff (B-C-H) formula holds for $\nfrak.$ Moreover, the usual local Lie group operation 
$\diamond$ defined on $\nfrak$ by $X\diamond Y= \Theta (X,Y)$ is a globally defined operation on $\nfrak$ relative to which $\nfrak$ is a global simply connected Banach Lie group.
In this case the exponential function is a bijection and in fact can be identified with the identity mapping.

One may now follow the proof of Rogers \cite{R} on pages 115--117 and construct a simply 
connected Banach Lie group $H$ whose Lie algebra is $\hfrak.$ Moreover this Banach Lie
group is in fact a super Lie group in the sense that its charts take on values in $\K^{m|n}$
and which are $G^{\infty}$ compatible. We outline this proof to be convincing that it is
valid in the $G^{\infty}$ category.

The group $H$ turns out to be a semi-direct product of  $\widetilde G$ and $N$ where $\widetilde G$ is the unique simply connected finite-dimensional Banach Lie group having the Banach Lie algebra $\gfrak^0$ as its Lie algebra and where $N=\nfrak$ is the quasi-nilpotent Lie group discussed above.

The action of $\widetilde G$ on $N$ is obtained via a sequence of steps described as follows.
First consider the representation $Adj^0$ of $\gz$ on $\gfrak$ defined by $Adj^0(X)(Y)=[X,Y].$
Let $(aut \ \gfrak)_0$ denote the identity component of the group of even automorphisms of $\gfrak$
and define a representation $\pi:\widetilde G\rightarrow (aut \ \gfrak)_0$ by requiring that
$\pi(exp_{\gz}(X))=exp_{End(\gfrak)} (Adj^0(X)),$ for each $X\in \gz.$ Next extend $\pi$ to 
$\pi':\widetilde G\rightarrow aut(\hfrak)$
by the requirement that $\pi'(g)(aX)=a\pi(g)(X)$ for all $a\in \Lambda, X\in \gfrak^{|a|}.$ Now observe that $\pi'(g)(\nfrak)\subseteq \nfrak$ for each $g\in \widetilde G$ and define $\pi''(g)=\pi'(g)|_{\nfrak}.$
Finally, define the desired action $\alpha$ of $\widetilde G$ on $N$ by requiring that, for each $g\in \widetilde G,$
$$\alpha(g)\circ exp_{\nfrak}=exp_{\nfrak} \circ \pi''(g).$$

With this action we define a group operation $\circ$ on $H=\widetilde G\times N$ via 
$$(g_1,n_1)\circ (g_2,n_2)=(g_1g_2, n_1\alpha(g_1)(n_2)).$$

\noindent Thus $H=\widetilde G \rtimes N$ with respect to the action $\alpha.$ Following Rogers, let $V,U,U_1$ be connected coordinate neighborhoods about the identity $e$ in $\widetilde G$ such that $UU\subseteq V, UU_1^{-1}\subseteq U.$
Let $\phi_e$ denote a $C^{\infty}$ chart of $\widetilde G$ defined on $U_1,$ thus $\phi_e(U_1)$ is an open subset of the Lie algebra $\gz.$ For each $g\in \widetilde G$ , define $U_g=U_1g,$ and 
$$\phi_g:U_g\rightarrow \gz  \quad \quad  hg\mapsto \phi_e(h).$$
Recall that there is a bijection $\eta$ from $N=\nfrak$ onto $\widetilde{\K^{m|n}}$ which we may regard as a global chart with values in the Banach space $\widetilde{\K^{m|n}}.$ Then
$\phi_g \times \eta $ may be identified as a chart on the open subset $U_g \times N$ with image
the open subset $\phi_g(U_g) \times \widetilde{\K^{m|n}}$ of $\gz \times \widetilde{\K^{m|n}}$
which we identify with $\K^m \times \widetilde{\K^{m|n}}= \K^{m|n}.$ It follows that there is an atlas
on the semi-direct product with values in the Banach space $\K^{m|n}.$ By Lemma 3.4  it follows that the transition mappings from one chart to another are $C^{\infty}$ mappings which are $\Lambda^0$ right linear mappings. Consequently it follows from Jadczyk  and Pilch  \cite{ JP}, page 380,section 5, that the transition mappings are class $G^{\infty}$ mappings and we have a $G^{\infty}$ atlas. It follows that $H$ is a $G^{\infty}$ super Lie group. We show that its Lie algebra is $\hfrak.$

Let $\{X_i\}_{i=1}^{m+n}$ denote a basis of $\gfrak$ such that 
 $\{X_i\}_{i=1}^m$ denotes a basis of $\gz$ and  $\{X_i\}_{i=m+1}^{m+n}$a basis of $\gfrak^1.$
Relative to this basis, elements of $\hfrak$ take the form 
$\sum_{i=1}^m {\lambda}^i  X_i +\sum_{a=1}^{m+n}\mu^a X_{a+m}$
where $\lambda^i\in \Lambda^0$ for each $i$ and $\mu_a\in \Lambda^1$ for each a. Elements of $\nfrak$ require the additional condition that the body of $\lambda^i$ be zero for each i.

We follow Rogers who shows us how to identify the Lie algebra of the super Lie group H. Since
$\gz$ may be identified with the Lie algebra of $\widetilde G$ it follows from Proposition \ref{P:LocalLie} 
that we can write elements $g_1,g_2\in \widetilde G$ near the identity
as $$g_1=exp(\sum_{i=1}^m tx_1^i X_i), \quad g_2=exp(\sum_{a=1}^m tx_2^i X_i)$$
\noindent  for real $x_1^i,x_2^i$ and small real t.

Recall from Rogers Theorem Part I  we found a Banach basis $X^J_i=\zeta^J\otimes X_i$ of 
$\ufrak$ as a Banach Lie algebra over $\K.$ It follows that elements of $\nfrak$ may be expanded
in this basis with coefficients in $\K$ and consequently we may write elements $n_1,n_2$ near the identity in $N$ as 
$$n_1=exp(\sum_{i=m+1}^{m+n} t x_{1J}^i X_i^J), \quad n_2=exp(\sum_{i=m+1}^{m+n} t x_{2J}^i X_i^J)$$
\noindent for real $x_{1J}^i, x_{2J}^i$ and  small real $t$ and where there is an implied sum over the multi-indices $J$ such that $|J|$ is nonzero and such that $|J|$ is even when $X_i$ is even and $|J|$ is odd when $X_i$ is odd.
The remainder of the proof that $\hfrak$ is the Lie algebra of $H$ follows Roger's argument at the bottom of page 116 and at the top of page 117 word for word with only minor change in notation and where in our context the sums over the multi-indices $J$ (in her notation $\mu$) range over all multi-indices without bound on $|J|.$

Thus we have the following Theorem:

\begin{theorem} Let $\gfrak=\gz \oplus \gfrak^1$ denote a $\Z_2$ graded Banach Lie algebra and let $\ufrak=\Lambda \otimes \gfrak$ denote the corresponding conventional super Lie algebra. Let 
$\hfrak$ denote the even $\Lambda^0$ submodule $(\Lambda^0 \otimes \gz) \oplus (\Lambda^1\otimes \gfrak^1) $of $\ufrak.$ Let $\widetilde G$ denote the unique simply connected finite dimensional Banach Lie group with Lie algebra $\gz$ and let $N$ denote the quasi-nilpotent Lie group having Lie algebra the quasi-nilpotent ideal of the body mapping $\beta$ from $\hfrak$ onto $\gz.$ Then there is an action $\alpha$  of $\widetilde G$ on $N$ which defines a semi-direct product structure on $\widetilde G \times N.$ If 
$H=\widetilde G\rtimes N$ denotes this semi-direct product then there is a $G^{\infty}$ atlas on $H$ relative to which $H$ is a $G^{\infty}$ super Lie group having Lie algebra the super Lie algebra $\hfrak.$

\end{theorem}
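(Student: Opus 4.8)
The plan is to assemble the theorem from the machinery already developed in the preceding subsections, so the proof is largely a matter of citing and organizing. First I would invoke the two lemmas of Subsection 3.1: Lemma~3.1 shows that for $X\in\nfrak$ the matrix of $ad_X$ has zero body, and Lemma~\ref{L:spectrum} upgrades this to the statement that $\xi I_{\nfrak}-ad_X$ is invertible precisely when $\xi\neq 0$, hence the spectrum of $ad_X$ is $\{0\}$. This is exactly Wojtyn'ski's criterion, so $\nfrak$ is a quasi-nilpotent Banach Lie algebra, the global B-C-H series converges on it, the operation $X\diamond Y=\Theta(X,Y)$ is globally defined, and $(\nfrak,\diamond)$ is the unique simply connected Banach Lie group $N$ with Lie algebra $\nfrak$, on which $\exp$ is the identity map. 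I would state this explicitly as the construction of $N$.

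Next I would construct the action $\alpha$ of $\widetilde G$ on $N$ exactly as laid out in the text: start from $Adj^0(X)(Y)=[X,Y]$ on $\gz$, exponentiate to a representation $\pi:\widetilde G\to(aut\ \gfrak)_0$, extend $\Lambda$-linearly to $\pi':\widetilde G\to aut(\hfrak)$, restrict to $\nfrak$ (checking $\pi'(g)(\nfrak)\subseteq\nfrak$, which follows because $\pi'(g)$ is even, $\Lambda^0$-linear, and commutes with the body map by construction) to get $\pi''(g)=\pi'(g)|_{\nfrak}$, and finally define $\alpha(g)$ by $\alpha(g)\circ\exp_{\nfrak}=\exp_{\nfrak}\circ\pi''(g)$; since $\exp_{\nfrak}=\mathrm{Id}$ this simply reads $\alpha(g)=\pi''(g)$. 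One should note $\alpha(g)$ is a Lie algebra automorphism of $\nfrak$, hence a group automorphism of $N$, and $g\mapsto\alpha(g)$ is smooth, so the semidirect product group law $(g_1,n_1)\circ(g_2,n_2)=(g_1g_2,n_1\alpha(g_1)(n_2))$ makes $H=\widetilde G\rtimes N$ a Banach Lie group.

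Then I would establish the $G^{\infty}$ structure. Cover $\widetilde G$ by translates $U_g=U_1 g$ of a coordinate neighborhood of the identity with chart $\phi_g:U_g\to\gz\cong\K^m$, and use the global chart $\eta:N\to\widetilde{\K^{m|n}}$; the products $\phi_g\times\eta$ give an atlas on $H$ with values in $\K^m\times\widetilde{\K^{m|n}}$, which is identified with $\K^{m|n}$ via the body/soul splitting of the first $m$ coordinates as in Lemma~3.4. By Lemma~3.4 the differential of each transition map is $\Lambda^0$ right-linear, and the transition maps are $C^{\infty}$ (being built from the smooth group operations); invoking Jadczyk--Pilch \cite{JP} (a $G^1$ map that is $C^{\infty}$ is $G^{\infty}$), the transitions are $G^{\infty}$, so $H$ is a $G^{\infty}$ super Lie group. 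Finally, to identify the Lie algebra, I would follow Rogers: write one-parameter subgroups $g_i=\exp(\sum t x_i^j X_j)$ in $\widetilde G$ and $n_i=\exp(\sum t x_{iJ}^j X_j^J)$ in $N$ in the Banach basis $X_i^J=\zeta^J\otimes X_i$, compute the product in $H$ to second order in $t$ using Proposition~\ref{P:LocalLie} and the definition of $\alpha$, and read off that the bracket on the tangent space at the identity is the bracket of $\hfrak$; the only wrinkle relative to Rogers is that the index $J$ now ranges over all (arbitrarily long) multi-indices, but since all the relevant series converge absolutely in the $\ell_1$ norm the term-by-term manipulations go through verbatim.

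The main obstacle, and the only place genuine care is needed, is the last step: verifying that Rogers' finite-dimensional computation identifying $\hfrak$ as the Lie algebra of $H$ survives the passage to infinitely many Grassmann generators. Everything there is an interchange of limits — differentiating the B-C-H product in $t$ while summing over the infinite multi-index set $J$ — so the crux is that the $\ell_1$/Banach-algebra estimates established in Subsection~3.1 (in particular $\|[Y,Z]\|\le\|Y\|\|Z\|$ on $\ufrak$ and the convergence radius $\ln 2$ from Proposition~\ref{P:LocalLie}) provide uniform bounds legitimizing the interchange. I would make this explicit rather than merely asserting that the argument goes through "word for word."
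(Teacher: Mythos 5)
Your proposal follows the paper's proof essentially step for step: establishing quasi-nilpotency of $\nfrak$ via Lemmas 3.1 and 3.2 and Wojtyn'ski's criterion, constructing the action $\alpha$ through the chain $Adj^0 \to \pi \to \pi' \to \pi''$, building the atlas from translated charts $\phi_g\times\eta$ with Lemma 3.4 and Jadczyk--Pilch giving $G^\infty$-compatibility, and then invoking Rogers' computation in the Banach basis $\{X^J_i\}$ to identify the Lie algebra as $\hfrak$. Your closing remark — that the interchange of $t$-differentiation with summation over the now-unbounded multi-index set $J$ needs the $\ell_1$ estimates to justify it, rather than being dismissed as going through "word for word" — is a legitimate refinement that the paper glosses over, but it does not change the route; it is the same proof with one loose joint tightened.
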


\section{Super Riemannian metrics and Super Spin Groups}

Let $\Mcal$ be a $G^{\infty}$ super manifold of dimension $(m,n)$ and $U$ an open subset of $\Mcal.$ We say that $X$ is a \underline {super vector field} on $U$ iff $X$ is a mapping with domain $U$ such that \newline
(1) for each $p\in U, \ X(p)$ is a mapping from $G^{\infty}(U) $ into $\Lambda$ such that $X(fg)=X(f)g(p)+(-1)^{|X| |f|}f(p)X(g)$ for $f,g \in  G^{\infty}(U),$ and \newline
(2) the mapping $X(f)$ defined by $p\rightarrow X(p)(f)$ is a $G^{\infty}$ mapping. \newline

We denote the $\Lambda $ module of all super vector fields on $U$ by $\Xcal(U).$ We follow Rogers \cite{Rsuperman, R} who  introduced these ideas in the manner utilized here.

\begin{definition} Let $\Mcal$ be a $G^{\infty}$ super manifold. We say that $g$ is a super Riemannian metric on $\Mcal$ iff $g$ is a mapping whose value $g(p)=g_p$ at each $p\in \Mcal$ is a mapping from 
$T_p\Mcal \times T_p\Mcal $ into $\Lambda$ which satisfies the following conditions.
For each pair of super vector fields $X,Y \in \Xcal(U)$ defined on an open subset $U$ of $\Mcal,$ the mapping $g(X, Y)$ from $U$ into $\Lambda$ defined by $p\rightarrow g_p(X(p),Y(p))$ is a $G^{\infty}$ mapping on $U$ such that for $Z \in \Xcal$ and $\lambda, \mu \in \Lambda,$

\[  \begin{array}{lcc}

(1) \  g(\lambda X +\mu Y,Z)=\lambda g( X ,Z)+\mu g(Y,Z) \\
(2) \ g(X, Y)=(-1)^{|X| |Y|} g(Y, X) \\
(3) \ g(\cdot,Z)=0  \quad    if \ and \ only \ if  \quad  Z=0.\\
\end{array} \] 

It follows that if either $X$ or $Y$ is even $g(X,Y)=g(Y,X)$ while for odd $X$ and $Y, $ $g(X,Y)=-g(Y,X).$ It is also true that for even $g,$ $g(X,\mu Y)=(-1)^{|\mu| |X|}g(X,Y).$
Here the parity of $g$ is defined by 
$$|g(X,Y)|=|g|+|X|+|Y|.$$

We additionally require that our super Riemannian metric satisfy the condition:

\bigskip
 (4) $g$ is even, thus , for all $X,Y\in \Xcal(U),$
 $$|g(X,Y)|=|X|+|Y|.$$
\end{definition}

\bigskip

\bigskip

\begin{lemma} There exists a $g$-orthogonal pure basis of $T^0_pM $ for each $p\in M.$

\end{lemma}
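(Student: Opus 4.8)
The plan is to adapt the classical Gram–Schmidt argument to the $\Z_2$-graded, $\Lambda$-valued setting, working at a single point $p$ with the even tangent space $V = T^0_p\Mcal$, which carries the induced even bilinear form $g_p$. Write $V = V^0 \oplus V^1$ for the splitting into even and odd parts (over $\Lambda^0$ and via the $\Lambda^1$-component respectively, matching the module structure of $\hfrak$ above). Because $g$ is even, $g_p$ pairs $V^0$ with $V^0$ and $V^1$ with $V^1$ and kills the cross terms, so the problem splits into two independent pieces: a symmetric $\Lambda^0$-valued form on the free $\Lambda^0$-module $V^0$ and an antisymmetric $\Lambda^0$-valued form on the free $\Lambda^0$-module $V^1$. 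A ``pure'' basis is one each of whose elements is homogeneous, and $g$-orthogonal means the Gram matrix is block-diagonal in the appropriate graded sense; so it suffices to diagonalize each piece separately.

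For the even-even block I would run ordinary Gram–Schmidt over the ring $\Lambda^0$: the key point is that nondegeneracy (condition (3)) forces, at each stage, the existence of a basis vector $e$ with $g_p(e,e)$ invertible in $\Lambda^0$. Invertibility in $\Lambda^0$ is controlled by the body map $\beta$ — an element of $\Lambda^0$ is a unit iff its body is nonzero — so one must argue that if every diagonal entry of the current Gram submatrix had zero body then the body of the whole Gram matrix would be a singular symmetric real matrix, contradicting nondegeneracy of the body metric (here I would use that a matrix over $\Lambda$ is invertible iff its body is, exactly as invoked in Lemma~\ref{L:spectrum}). Having found such an $e$, subtract off multiples $g_p(e,v)\,g_p(e,e)^{-1}e$ to clear the rest of its row and column, and induct on the rank. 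For the odd-odd block the form is skew over $\Lambda^0$, so instead of diagonalizing I would produce a symplectic (hyperbolic) basis: pick $f_1$ with some $g_p(f_1,f_2')$ invertible (again justified by passing to bodies, where a nondegenerate antisymmetric real form is standard symplectic), normalize $f_2$ so that $g_p(f_1,f_2)=1$, split off the $g_p$-orthogonal complement of $\mathrm{span}(f_1,f_2)$, and induct; in particular $n$ must be even, which is forced by nondegeneracy of the body skew-form.

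The main obstacle — and the only place genuine care is needed — is the justification that invertible pivots always exist, i.e. that nondegeneracy of $g_p$ over $\Lambda^0$ propagates to each successive Gram submatrix and to the body-level real forms. The clean way to package this is: show first that $g_p$ being nondegenerate (condition (3)) is equivalent to its Gram matrix (in any pure basis) being invertible over $\Lambda^0$, hence to its body being an invertible real matrix; then the body of the even-even Gram block is an invertible symmetric real matrix and the body of the odd-odd block an invertible antisymmetric real matrix, and classical linear algebra supplies the pivots, which lift to $\Lambda^0$-units. One secondary point to record is that the Gram–Schmidt and symplectic reductions keep all produced vectors homogeneous of the correct parity (since we only take $\Lambda^0$-linear combinations within $V^0$ and within $V^1$), so the resulting basis is pure; and completeness of $\Lambda$ is not needed here since everything at a point is finite-dimensional over $\Lambda^0$. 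I would also remark that this lemma is the pointwise input to the canonical-form results of Section~4, so only existence at each $p$ — not smooth dependence — is claimed.
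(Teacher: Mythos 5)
There is a genuine gap in your pivot-existence argument, and also a misreading of the scope of the lemma. The lemma only asks for a $g$-orthogonal basis of $T^0_p\Mcal$, the \emph{even} part of the tangent module (the odd part and its symplectic normal form are treated in separate, later lemmas). You instead try to orthogonalize both blocks at once, and your plan rests on the claim that $g_p$ ``kills the cross terms'' between the even and odd parts. That is false for $g_p$ itself: with a pure basis $\{e_i,e_\alpha\}$ the off-diagonal Gram blocks $C=(g(e_i,e_\alpha))$ and $D$ have \emph{odd} supernumber entries, which are generally nonzero -- only their \emph{bodies} vanish. Making $T^0_p\Mcal$ orthogonal to its complement requires the explicit correction $f_\alpha = e_\alpha - \sum_j g(d_j^{-1}\bar e_j, e_\alpha)\bar e_j$, which the paper carries out in a subsequent lemma; it is not automatic.

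The more serious flaw is in the pivot justification for Gram–Schmidt. You argue: ``if every diagonal entry of the current Gram submatrix had zero body then the body of the whole Gram matrix would be a singular symmetric real matrix.'' This implication is false -- $\bigl(\begin{smallmatrix}0&1\\1&0\end{smallmatrix}\bigr)$ is symmetric, invertible, and has zero diagonal. What is true is that a symmetric real bilinear form with $q(v,v)=0$ for \emph{all} $v$ must vanish, but that is a statement about all vectors, not about the chosen basis vectors; invoking it requires a change of basis you have not set up. The paper sidesteps the issue cleanly: since $\beta(A)$ is real symmetric and invertible, the spectral theorem provides an \emph{orthogonal} real matrix $\Ocal$ diagonalizing $\beta(A)$; after replacing $e_i$ by $\bar e_i=\Ocal_i^j e_j$, every diagonal entry of the new Gram block has a nonzero invertible body, and the modified Gram–Schmidt over $\Lambda^0$ then goes through without ever needing to hunt for a pivot. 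Your second, ``clean packaging'' paragraph gestures toward this (``classical linear algebra supplies the pivots''), but as written it does not replace the flawed explicit argument with a correct one, so the proof is incomplete at exactly the step you identified as the crux.
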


\begin{proof} First let $\{e_i\}$ denote a basis of $T^0_pM$ and $\{e^1_{\alpha}\}$ a basis of 
$T_p^1M.$ Let $G$ denote the matrix of $g$ relative to this basis. Then $G$ is invertible and 

     \[G= \left( \begin{array}{ccc}
                                           A    &   C \\
                                                  
                                          D     &   B
                                          \end{array} \right)   \]

\noindent where $A=(g(e_i,e_j))$ and $ B=(g(e^1_{\alpha},e^1_{\beta})),$ are invertible matrices with
all their entries from $\Lambda^0.$ Thus $A$ is an invertible symmetric matrix as is also its 
body $\beta(A).$  Now $\beta(A)$ is a symmetric invertible  matrix with its entries from $\R,$ 
and therefore there 
exists an orthogonal matrix $\Ocal$ such that if $\bar e_i=\Ocal_i^je_j$ and 
$\bar A=(g(\bar e_i,\bar e_j)),$ then $\beta(\bar A)$ is an invertible diagonal matrix over $\R.$
We now modify the usual Gram-Schmit orthogonalization process to diagonalize $\bar A$ itself.
Define a new basis $\{f_i\} $ of $T^0_pM$ inductively as follows.
Define $f_1=\bar e_1.$ For each $k$ let
$$f_{k+1}=
\bar e_{k+1}-g(\bar e_{k+1},f_k)g(f_k,f_k)^{-1}f_k-\cdots -g(\bar e_{k+1},f_1)g(f_1,f_1)^{-1}f_1.$$
We assume inductively that $\{ f_l : 1\leq l \leq k \}$ are orthogonal and  that $g(f_l,f_l)$ is invertible
for each $l.$ In order for this to be meaningful we must show that $f_{k+1}$ is $g$-orthogonal to $f_l$ for each
$1\leq l \leq k$ and that $g(f_{k+1},f_{k+1})$ is invertible in  $\Lambda^0.$

First observe that for $1\leq l  \leq k,$
$$g(f_{k+1},f_l)=g(\bar e_{k+1},f_l)-g(\bar e_{k+1},f_l)g(f_l,f_l)^{-1}g(f_l,f_l)=0.$$
We must now show that $g(f_{k+1},f_{k+1})$ is an invertible element of $\Lambda^0.$
Notice that $\bar e_2=f_2  +af_1=f_2+a \bar e_1$ for some $a\in \Lambda^0, 
\bar e_3=f_3+ bf_1+cf_2=f_3+b\bar e_1+c(\bar e_2-a \bar e_1)$ for some $b,c\in \Lambda^0.$
In general an inductive argument shows that 
$\bar e_{k+1}= f_{k+1}+\sum_{i=1}^ka_i \bar e_i$ for some choice of $a_i\in \Lambda^0.$
Since the $f_i$ are orthogonal and since $g(f_{k+1},f_{l})= 0, 1\leq l \leq k,$
$$g(f_{k+1},f_{k+1})=g(f_{k+1},\bar e_{k+1})=
g(\bar e_{k+1}-\sum_{i=1}^k a_i\bar e_i,\bar e_{k+1})$$
$$=g(\bar e_{k+1},\bar e_{k+1})
-\sum_{i=1}^k a_ig(\bar e_{i},\bar e_{k+1}).$$

\noindent Now $\beta(\bar A)$  is diagonal and invertible, thus 
$$\beta(g(f_{k+1},f_{k+1}))=\beta(g(\bar e_{k+1},\bar e_{k+1}))$$
and consequently, the body of $g(f_{k+1},f_{k+1})$ is invertible in $\R.$ Therefore
$g(f_{k+1},f_{k+1})$ is invertible in $\Lambda^0.$ It now follows that $\{f_1,f_2,\cdots ,f_{k+1} \}$
is an orthogonal set of vectors such that $g(f_i,f_i) $ is invertible in $\Lambda^0$ for each $i.$ The lemma
follows by induction.

\end{proof}

\begin{remark} The last lemma clearly holds in a slightly more general context. If we begin
with local vector fields $\{e_i\}$ which form a  basis of $T_q^0M$ at each point $q$ of a neighborhood $U$ of $p$ and if  $\{e^1_{\alpha}\}$ are vector fields on $U$ which form  a basis of  $T_q^1M$ at each point $q$ of $U$ then there exists local vector fields defined at each point of  $U$ which are pure bases of $T_qM$ at each point $q\in U$ such that the even vector fields in the basis are $g$-orthogonal at each point of $U.$

\end{remark}

\begin{lemma} Let $\{\bar e_i \}$ denote a $g$-orthogonal basis of $T^0_pM$ over $\Lambda^0$
and for each $i$ let $d_i=g(\bar e_i,\bar e_i)$ be invertible in $\Lambda^0.$  Let $E=T^0_pM$ and let
$E^{\perp}$ denote its $g$-orthogonal complement in $T_pM.$ Then $T_pM$ is the orthogonal
direct sum of $E$ and $E^{\perp}.$ Moreover if $\{ e_{\alpha} \}$ is a basis of $T^1_pM$ over $\Lambda^0$ then $\{f_{\alpha} \}$ is a basis of $E^{\perp}$ over $\Lambda^0$ where
$$f_{\alpha}=e_{\alpha}-\sum_j g(d_j\bar e_j,e_{\alpha})e_j$$
\noindent for each $\alpha.$ Finally, $f_{\alpha}$ is odd for each $\alpha$ and so $g$ restricted
to $E^{\perp} \times E^{\perp}$ is skew-symmetric.

\end{lemma}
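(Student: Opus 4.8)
\emph{Proof strategy.} The plan is to split the statement into four claims — (i) $T_pM=E\oplus E^{\perp}$ with the sum $g$-orthogonal; (ii) the $f_{\alpha}$ form a basis of $E^{\perp}$ over $\Lambda^0$; (iii) each $f_{\alpha}$ is odd; (iv) the matrix $\big(g(f_{\alpha},f_{\beta})\big)$ is skew-symmetric — and to run all four off a single device. Since $\{\bar e_i\}$ is $g$-orthogonal and each $d_i=g(\bar e_i,\bar e_i)$ is invertible in $\Lambda^0$, the matrix of $g$ on $E\times E$ in this basis is diagonal with invertible diagonal entries $d_i$, hence invertible; so $E$ carries a $g$-orthogonal projection $P\colon T_pM\to E$, $Pv=\sum_j g(v,\bar e_j)\,d_j^{-1}\,\bar e_j$, which is $\Lambda$-linear by property (1), and the vectors of the statement are $f_{\alpha}=e_{\alpha}-Pe_{\alpha}$.

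First I would prove (i). That $Pv\in E$ is clear. To see $v-Pv\in E^{\perp}$ it suffices, by $\Z_2$-bilinearity of $g$, to test against $\{\bar e_k\}$: pulling the scalars out of the first slot by property (1) and using $g(\bar e_j,\bar e_k)=\delta_{jk}\,d_j$ gives $g(Pv,\bar e_k)=g(v,\bar e_k)$, so $g(v-Pv,\bar e_k)=0$ for all $k$; hence $v=Pv+(v-Pv)$ shows $T_pM=E+E^{\perp}$. For directness, if $w=\sum_j c_j\bar e_j\in E\cap E^{\perp}$ then $0=g(w,\bar e_k)=c_k d_k$ for all $k$, so $c_k=0$ since $d_k$ is invertible; thus $E\cap E^{\perp}=\{0\}$. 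Therefore $T_pM=E\oplus E^{\perp}$, and the sum is $g$-orthogonal by the definition of $E^{\perp}$.

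Next, (ii) and (iii). Since $f_{\alpha}=e_{\alpha}-Pe_{\alpha}$ with $Pe_{\alpha}\in E$, part (i) gives $f_{\alpha}\in E^{\perp}$. For parity we use that $g$ is even (condition (4)): as $\bar e_j$ is even and $e_{\alpha}$ odd, $g(e_{\alpha},\bar e_j)$ is odd, and multiplying by the even $d_j^{-1}$ and the even vector $\bar e_j$ keeps it odd, so $Pe_{\alpha}$ is odd and $f_{\alpha}$ is odd; this is the one step that genuinely needs the evenness of $g$. For the basis claim, use the splitting $T_pM=E\oplus T^1_pM$ coming from the basis $\{\bar e_i\}\cup\{e_{\alpha}\}$: given $w\in E^{\perp}$, write $w=w_E+\sum_{\alpha}\mu^{\alpha}e_{\alpha}$ with $w_E\in E$ and $\mu^{\alpha}\in\Lambda^0$; then $\sum_{\alpha}\mu^{\alpha}f_{\alpha}=\sum_{\alpha}\mu^{\alpha}e_{\alpha}-P\!\big(\sum_{\alpha}\mu^{\alpha}e_{\alpha}\big)\in E^{\perp}$ by $\Lambda$-linearity of $P$, so $w-\sum_{\alpha}\mu^{\alpha}f_{\alpha}$ lies in $E\cap E^{\perp}=\{0\}$, giving $w=\sum_{\alpha}\mu^{\alpha}f_{\alpha}$; and if $\sum_{\alpha}\mu^{\alpha}f_{\alpha}=0$ then $\sum_{\alpha}\mu^{\alpha}e_{\alpha}=P\!\big(\sum_{\alpha}\mu^{\alpha}e_{\alpha}\big)\in E\cap T^1_pM=\{0\}$, forcing every $\mu^{\alpha}=0$. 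Hence $\{f_{\alpha}\}$ is a $\Lambda^0$-basis of $E^{\perp}$.

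Finally, (iv) is immediate: $\{f_{\alpha}\}$ is a basis of $E^{\perp}$ made of odd vectors, so property (2) applied to $f_{\alpha},f_{\beta}$ yields $g(f_{\alpha},f_{\beta})=-g(f_{\beta},f_{\alpha})$ for all $\alpha,\beta$, which is exactly the assertion that $g|_{E^{\perp}\times E^{\perp}}$ is skew-symmetric. I do not anticipate a real obstacle: this is the classical statement that a nondegenerate subspace splits off its orthogonal complement, and the only thing requiring care is the superalgebraic bookkeeping — Koszul signs and the left/right placement of the even scalars $d_j^{-1}$ and $\mu^{\alpha}$ — which never obstructs the argument, since $d_j^{-1}$ is even and all sign factors on the body-zero correction terms are trivial. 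The substantive hypotheses are precisely those in the statement and the definition: $g$-orthogonality of the $\bar e_i$, invertibility of the $d_i$, and evenness of $g$.
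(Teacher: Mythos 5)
Your proof is correct and essentially the same as the paper's: both isolate the $E$-component of an arbitrary vector by pairing against the $g$-orthogonal basis $\{\bar e_i\}$ and dividing by $d_i$, then read off the parity and skew-symmetry from the evenness of $g$. Your packaging via the orthogonal projection $P$ is a tidier organization of the same coordinate computation (and, incidentally, your formula for $f_{\alpha}$ agrees with the one used in the paper's proof, $f_{\alpha}=e_{\alpha}-\sum_j g(d_j^{-1}\bar e_j,e_{\alpha})\bar e_j$, which corrects a typographical slip in the lemma's displayed formula, where $d_j$ should be $d_j^{-1}$ and $e_j$ should be $\bar e_j$).
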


\begin{proof} Let $x\in E^{\perp}$ and write $x$ in terms of the given basis of $T_pM$ over $\Lambda^0,$ $x=\sum_ix_0^i\bar e_i+\sum_{\alpha}x^{\alpha}e_{\alpha},$ then $g(\bar e_i,x)=0$  for all $i$ and since the $\bar e_i$ are orthogonal,
$$0=g(\bar e_i,x)=g(\bar e_i,\sum_ix_0^j \bar e_j)+g(\bar e_i,\sum_{\alpha}x^{\alpha}e^{\alpha})
=x^i_0d_i+\sum_{\alpha}x^{\alpha}g(\bar e_i, e_{\alpha}).$$
It follows that $x_0^i=-d_i^{-1}\sum_{\alpha}x^{\alpha} g(\bar e_i,e_{\alpha})$ and that 
$$x=-\sum_j \sum_{\alpha} x^{\alpha} g(d_j^{-1}\bar e_j,e_{\alpha})\bar e_j+\sum_{\alpha}x^{\alpha}e_{\alpha}=\sum_{\alpha} x^{\alpha} (e_{\alpha}-\sum_j g(d_j^{-1}\bar e_j,e_{\alpha}) \bar e_j).$$
Thus if $f_{\alpha}=e_{\alpha}-\sum_j g(d_j^{-1}\bar e_j,e_{\alpha}) \bar e_j)$ then $x\in E^{\perp}$
if and only if $x=\sum_{\alpha} x^{\alpha}f_{\alpha}.$ Clearly $\{f_{\alpha} \}$ freely generates $E^{\perp}$ since $\sum_{\alpha}x^{\alpha}f_{\alpha}=0$ implies that 
$$\sum_ix_0^i\bar e_i+\sum_{\alpha}x^{\alpha}e_{\alpha}=0$$
where $x_0^j=d_j^{-1}\sum_{\alpha}x^{\alpha} g(\bar e_j,e_{\alpha}).$
Consequently  $x^{\alpha}=0$ for each $\alpha$  since the $\{\bar e_j,e_{\alpha} \}$ freely generate $T_pM.$ Finally observe that $|g(\bar e_j,e_{\alpha})|=|\bar e_j|+|e_{\alpha}|=1.$ It follows that 
$|f_{\alpha}|=1$ is odd and that $g(f_{\alpha}, f_{\beta})=-g( f_{\beta},f_{\alpha}).$

\end{proof}

\begin{remark} This lemma may also be formulated in terms of local vector fields 
defined in a neighborhood of a point in an obvious manner or in terms of a basis
of local sections of the vector bundles $T^0M\rightarrow M, E^{\perp}\rightarrow M.$
We leave these tweaks of the language to the reader.
\end{remark}

\begin{lemma} Let $\{\bar e_i \}$ denote a set of local vector fields defined on a neighborhood $U$ of $p\in M$ which are $g$-orthogonal over $\Lambda^0$ at each point of $U.$
At each point $q\in U$ and for each $i$ let $d_i(q)=g(\bar e_i(q),\bar e_i(q))$ be invertible in $\Lambda^0.$  Let $E_q=T^0_qM$ for $q\in U$  and let
$E^{\perp}_q$ denote its $g$-orthogonal complement in $T_qM.$  Moreover, let $\{f_{\alpha}(q)\}$ denote the basis of $E^{\perp}_q$ at each $q\in U$ defined in the previous lemma. Then there exists a basis of local sections 
$\{\bar f_{\alpha}\} $ of $E^{\perp}$ defined on an open set of $M$ containing $p$ such that  the matrix $\bar B=(g(\bar f_{\alpha},\bar f_{\beta}))$ of 
$\omega \equiv  g| (E^{\perp} \times E^{\perp})$ has the form $\bar B=\Jcal$ where

     \[ \Jcal \equiv \left( \begin{array}{ccccc}
                                           J    &   0 & 0 & \cdots &0 \\
                                                  
                                          0     &   J  & 0 & \cdots & 0 \\
                                          
                                           &  &         \cdots       &   &               \\
                                                  
                                            0 & 0 &0 &  \cdots &    J      
                                          \end{array} \right)   \]   

\noindent and where

     \[ J=\left( \begin{array}{ccc}
                                           0    &   1 \\
                                                  
                                          -1     &   0
                                          \end{array} \right) .  \]   
\end{lemma}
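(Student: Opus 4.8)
The goal is to take the $g$-orthogonal basis $\{f_\alpha(q)\}$ of $E^\perp_q$ furnished by the previous lemma and replace it by a new basis of local sections $\{\bar f_\alpha\}$ with respect to which the skew-symmetric even form $\omega = g|(E^\perp \times E^\perp)$ has the standard symplectic block form $\Jcal$. The plan is to run a \emph{symplectic} Gram--Schmidt (Darboux) procedure over the Banach algebra $\Lambda^0$, mirroring the orthogonalization argument used in the first lemma of this section, but taking care at each step that the pivot entry one divides by is invertible in $\Lambda^0$ — which, exactly as before, is guaranteed by checking that its body is a nonzero real number.

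First I would fix the starting basis $\{f_\alpha\}$ of $E^\perp$ over $\Lambda^0$ on a neighborhood of $p$, with matrix $B = (g(f_\alpha, f_\beta))$ whose entries lie in $\Lambda^0$ and which is skew-symmetric (established in the previous lemma). Since $B$ is invertible, its body $\beta(B)$ is a real invertible skew-symmetric matrix; in particular $n$ (the odd dimension) is even, and by the classical real symplectic normal form there is a constant invertible matrix $P$ over $\R$ with $P^T \beta(B) P = \Jcal$. Replacing $\{f_\alpha\}$ by the basis obtained from the constant change of coordinates $P$, I may assume from the outset that $\beta(B) = \Jcal$. Now I would proceed inductively to clean up the soul. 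Pick the first two basis vectors: $g(f_1,f_2)$ has body $1$, hence is invertible in $\Lambda^0$. Rescale $f_2$ by $g(f_1,f_2)^{-1}$ so that $g(f_1,f_2)=1$ (note $g(f_1,f_1)=0$ automatically by skew-symmetry of an even form on odd vectors, and likewise $g(f_2,f_2)=0$). Then, for every $\gamma \geq 3$, replace $f_\gamma$ by
$$f_\gamma \;-\; g(f_\gamma,f_2)\,f_1 \;+\; g(f_\gamma,f_1)\,f_2,$$
which is $g$-orthogonal to both $f_1$ and $f_2$; a short computation using skew-symmetry shows $g(\text{new }f_\gamma, f_1) = g(\text{new }f_\gamma, f_2) = 0$. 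This splits off the first $J$-block. The remaining vectors $\{f_\gamma\}_{\gamma\geq 3}$ still span a free $\Lambda^0$-submodule on which $\omega$ restricts to an even skew form whose body matrix is the corresponding sub-block of $\Jcal$, again invertible; repeating the step finitely many times produces $\{\bar f_\alpha\}$ with $\bar B = \Jcal$.

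The one point requiring genuine care — and the main potential obstacle — is maintaining \emph{at every stage} that the pivot entry $g(\bar f_{2k-1}, \bar f_{2k})$ one inverts has invertible body. After the $(k-1)$-st step the body matrix of $\omega$ on the span of the remaining vectors has been preserved to be the relevant sub-block of $\Jcal$ (because all the corrections added have coefficients in $\Lambda^0$ and the body map $\beta$ is a ring homomorphism that commutes with the linear-algebraic manipulations, so $\beta(g(\bar f_\gamma, \bar f_\delta))$ is unchanged by soul corrections that are orthogonalizations), so the next pivot still has body $1$ and is invertible in $\Lambda^0$ by the standard fact (used already in this section) that an element of $\Lambda^0$ is invertible iff its body is nonzero. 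I would record this invariance as the inductive hypothesis. Finally, as in the earlier remark of this section, the whole construction is manifestly local in $q \in U$ and the resulting $\bar f_\alpha$ are $G^\infty$ local sections of $E^\perp$, since they are obtained from the original $G^\infty$ sections $f_\alpha$ by finitely many additions and multiplications by the $G^\infty$ functions $q \mapsto g(f_\gamma(q), f_\delta(q))$ and their $\Lambda^0$-inverses, the latter being $G^\infty$ because inversion in the Banach algebra $\Lambda$ is $G^\infty$ on the open set of invertibles. This completes the proof.
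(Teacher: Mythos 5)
Your proof is correct and follows essentially the same approach as the paper: a symplectic Gram--Schmidt (Darboux) procedure over $\Lambda^0$, where invertibility of each pivot is guaranteed by checking its body via the non-degeneracy of $\beta(\omega)$. The only cosmetic difference is that you normalize the body matrix to $\Jcal$ once at the start via a constant real change of basis (and then argue that the soul corrections preserve this body), whereas the paper simply selects a pivot with nonzero body at each inductive step and splits off a rank-two $\omega$-nondegenerate $\Lambda^0$-submodule together with its $\omega$-orthogonal complement; both achieve the same Darboux reduction, and your pairing $(\bar f_1,\bar f_2),(\bar f_3,\bar f_4),\dots$ in fact produces the block-diagonal $\Jcal$ directly, while the paper's indexing $(\bar f_1,\bar f_{l+1}),(\bar f_2,\bar f_{l+2}),\dots$ requires an implicit reordering to match the stated form.
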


\begin{proof} Let $B$ denote the matrix $(g(f_{\alpha},f_{\beta})),$ then $B$ is skew-symmetric with all of its entries in $\Lambda^0.$ Since $B$ is invertible so is its body and both $\omega$ and its body $\beta(\omega)$ are non-degenerate. Since $\beta(\omega)$ is non-degenerate the dimension $n=dim(E^{\perp}_q)=dim(T^1_qM)$ must be even for each $q.$ Let $n=2l.$ Choose $\hat f_1=f_1$ and choose $\hat f_{l+1}\in E^{\perp}$ such that $\beta(\omega)(\hat f_1,\hat f_{l+1}) \neq 0.$

\bigskip

\noindent The existence of $\hat f_{l+1}$ follows from the fact that $\beta(\omega)$ is non-degenerate.
Let $z=\omega(\hat f_1,\hat f_{l+1}),$ then $z$ is invertible in $\Lambda^0$ since its body is nonzero. Let $\bar f_1 =\hat f_1= f_1,\bar f_{l+1}=z^{-1} \hat f^{l+1}.$ We have
$$\omega(\bar f_1,\bar f_1)=0 \quad 
\quad  \omega(\bar f_{l+1},\bar f_{l+1})=0  \quad \quad \omega(\bar f_1,\bar f_{l+1})=1.$$

\noindent Let 
$$F_1=\{ z_1\bar f_1+ z_2\bar f_{l+1} \ | \  z_1,z_2 \in \Lambda^0 \}  \quad \ and \ 
\quad \ F_2=\{ x\in E^{\perp} \ | \ \omega(x,F_1)=0 \}, $$
thus $F_2$ is the $\omega$-orthogonal complement of $F_1$ in $E^{\perp}.$
Notice that if $y\in F_1 \cap F_2 $ then $y=w_1\bar f_1+ w_2\bar f_{l+1},  w_1, w_2 \in \Lambda^0$
and $\omega(w_1\bar f_1+ w_2\bar f_{l+1}, z_1\bar f_1+ z_2\bar f_{l+1} )=0$ for all $z_1,z_2 \in\Lambda^0.$ Since $\omega $ is skew, it follows that $(w_1z_2-w_2z_1)\omega(\bar f_1,\bar f_{l+1})=0$ for all $z_1,z_2\in \Lambda^0$  and therefore that $w_1z_2=w_2z_1.$ Choosing $z_1=0,z_2=1$  implies $w_1=0.$ Reversing the choice gives $w_2=0.$ Notice that these choices are indeed in $\Lambda^0.$ So $y=0$ and $F_1 \cap F_2 =0.$ We now show that $E^{\perp}=F_1+F_2.$ Let $x\in E^{\perp},$ we show that
 $x-\omega(x,\bar f_{l+1})\bar f_1+\omega(x,\bar f_1)\bar f_{l+1} $ is in $F_2.$ Indeed, for $z_1,z_2 \in \Lambda^0,$
\[
\begin{array}{lcc}

\omega(x-\omega(x,\bar f_{l+1})\bar f_1+ \omega(x,\bar f_1)\bar f_{l+1} ,z_1\bar f_1 +z_2 \bar f_{l+1})  \\
= z_1\omega(x,\bar f_1)+z_2\omega(x,\bar f_{l+1})-\omega(x,\bar f_{l+1})z_2\omega (\bar f_1,\bar f_{l+1}) +\omega(x,\bar f_1)z_1\omega(\bar f_{l+1},\bar f_1) \\
=z_1\omega(x,\bar f_1)+z_2\omega(x,\bar f_{l+1})-\omega(x,\bar f_{l+1})z_2+\omega(x,\bar f_1)(-z_1)=0.  

 \end {array} \nonumber 
 \]
where we have used the facts that $\omega(\bar f_1,\bar f_1)=0=\omega(\bar f_{l+1},\bar f_{l+1})=0.$\newline

It follows that $y \equiv x-\omega(x,\bar f_{l+1})\bar f_1+\omega(x,\bar f_1)\bar f_{l+1} $ is in $F_2$
and that $x=\omega(x,\bar f_{l+1})\bar f_1-\omega(x,\bar f_1)\bar f_{l+1} +y \in F_1+ F_2.$
If $l=1$ the lemma follows. When $l>1$
repeat the process above on $F_2,$ i.e., choose $\bar f_2=\hat f_2$ to be an arbitrary nonzero element  of $F_2$ and choose $\hat f_{l+2}\in F_2$ such that $\beta(\omega)(\hat f_2,\hat f_{l+2})\neq 0. $ This is possible when $l>1$ since if it were the case that $\omega(\hat f_2, F_2)=0$  then it would follow that $\omega(\hat f_2,E^{\perp})=\omega(\hat f_2,F_1+F_2)=0$ and thus that $\hat f_2=0.$ Following the steps above one is able to split $F_2$ as a direct sum of the $\Lambda^0$ sub-module spanned by $\bar f_2, \bar f_{l+2}$ and its $\omega$-orthogonal complement $F_3$ in $F_2.$  At this stage one has
$$\E_{\perp}=<\bar f_1,\bar f_{l+1}> \oplus <\bar f_2,\bar f_{l+2}> \oplus F_3.$$

An inductive argument then establishes the Lemma.

\end{proof}

The following Theorem is an immediate consequence of these Lemmas.

\begin{theorem} Assume that $g$ is a super Riemannian metric on a supermanifold $M.$
If $p\in M$ then there exists a neighborhood $U$of $p$ and  a basis $\{ \bar e_i, \bar f_{\alpha} \}$  of vector
fields on $U$ over $\Lambda^0$
such that the matrix $\Gamma=\Gamma_g$ of the metric $g$ relative to this basis takes the form:

     \[  \Gamma_g=\left ( \begin{array}{ccc}
                                          \eta    &   0 \\
                                                  
                                          0     &   \Jcal
                                          \end{array} \right)   \]  

\noindent where $\eta$ is diagonal with entries invertible elements $d_i$ of $\Lambda^0,$ and where
$\Jcal$ is the standard symplectic matrix

     \[   \Jcal=\left ( \begin{array}{ccccc}
                                           J    &   0 & 0 & \cdots &0 \\
                                                  
                                          0     &   J  & 0 & \cdots & 0 \\
                                          
                                           &  &         \cdots       &   &               \\
                                                  
                                            0 & 0 &0 &  \cdots &    J      
                                          \end{array} \right) .  \]

\end{theorem}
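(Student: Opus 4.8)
The plan is to assemble the theorem directly from the three lemmas already proved, so the proof amounts to bookkeeping about how the pieces fit together. First I would invoke Lemma 3.6 (the existence of a $g$-orthogonal pure basis of $T^0_pM$) together with its subsequent Remark, which upgrades the pointwise statement to local vector fields: this produces local vector fields $\{\bar e_i\}$ on a neighborhood $U$ of $p$ which form a $g$-orthogonal basis of $T^0_qM$ at every $q\in U$, with each $d_i(q)=g(\bar e_i(q),\bar e_i(q))$ invertible in $\Lambda^0$. By construction (and the Gram--Schmidt step in Lemma 3.6) the matrix of $g$ restricted to the even part, in this basis, is the diagonal matrix $\eta=\mathrm{diag}(d_1,\dots,d_m)$ with invertible diagonal entries.

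Next I would apply Lemma 3.8 and its Remark at each point of $U$: $T_qM$ splits as the $g$-orthogonal direct sum $E_q\oplus E^{\perp}_q$ with $E_q=T^0_qM$, and the odd vector fields $f_{\alpha}=e_{\alpha}-\sum_j g(d_j^{-1}\bar e_j,e_{\alpha})\bar e_j$ furnish a basis of $E^{\perp}$ consisting of odd sections, so that $\omega\equiv g|(E^{\perp}\times E^{\perp})$ is skew-symmetric with entries in $\Lambda^0$. Then I would feed these $\{f_{\alpha}\}$ into Lemma 3.10, which produces a new local basis $\{\bar f_{\alpha}\}$ of $E^{\perp}$ (defined on a possibly smaller neighborhood of $p$, still called $U$) in which the matrix of $\omega$ is exactly the standard symplectic form $\Jcal$ made of $2\times 2$ blocks $J=\left(\begin{smallmatrix}0&1\\-1&0\end{smallmatrix}\right)$.

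Finally I would combine the two bases into the single pure basis $\{\bar e_i,\bar f_{\alpha}\}$ of $TM$ over $\Lambda^0$ on $U$ and compute the full matrix $\Gamma_g=(g(\cdot,\cdot))$ in block form. The even--even block is $\eta$ by the first step; the odd--odd block is $\Jcal$ by the third step; and the two off-diagonal (even--odd) blocks vanish precisely because $\bar f_{\alpha}\in E^{\perp}=E^{\perp}_q$ is $g$-orthogonal to $E_q=T^0_qM=\mathrm{span}_{\Lambda^0}\{\bar e_i\}$ by the definition of the orthogonal complement in Lemma 3.8. This yields
$$\Gamma_g=\left(\begin{array}{cc}\eta&0\\0&\Jcal\end{array}\right),$$
which is the desired canonical form.

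The only genuine subtlety — and hence the "main obstacle," though it is a mild one — is tracking domains: Lemma 3.6's Remark, Lemma 3.8's Remark, and Lemma 3.10 each may require shrinking the neighborhood, and Lemma 3.10 in particular needs the $\hat f$-selection steps (choosing $\hat f_{l+k}$ with $\beta(\omega)(\hat f_k,\hat f_{l+k})\neq 0$) to be performable by \emph{local sections}, not merely pointwise, which is exactly what the Remark following Lemma 3.8 and the local phrasing of Lemma 3.10 guarantee. Beyond intersecting finitely many open sets to get a common $U$ on which all three conclusions hold simultaneously, nothing further is needed; the block-diagonal structure is then immediate from orthogonality of $E$ and $E^{\perp}$.
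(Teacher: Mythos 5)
Your proposal is correct and follows exactly the route the paper takes: the paper itself states only that the theorem ``is an immediate consequence of these Lemmas,'' and your assembly of the orthogonal even basis from the first lemma and its remark, the odd orthogonal complement basis from the second, and the symplectic normalization from the third, with the off-diagonal blocks vanishing by definition of $E^{\perp}$, is precisely what is meant. Your attention to domain-shrinking is a reasonable extra care the paper leaves implicit.
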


Since the diagonal elements $d_i$ of the matrix $\eta$ are supernumbers and since
our supernumbers are infinitely generated one cannot generally transform $\eta$ to a
form having only $\pm 1$ on the diagonal. Thus generally $\Gamma_g$ is neither an orthosymplectic nor is 
it a generalized orthosymplectic matrix. 

Notice that the matrix  $\Gamma=\Gamma_g$ depends only on the diagonal elements $d_i$ and on the matrix $\Jcal$ and consequently is dependent on the choice of basis $\{e_i\}.$   Following DeWitt \cite{Dewitt} we find conditions relative to
which $\Gamma_g$ is a generalized orthosymplectic matrix. When these conditions are met, DeWitt \cite{Dewitt} calls this matrix the canonical form
of the super metric $g.$

In general the $d_i$ are functions from  $U$ into $\Lambda^0$ since the $\{e_i\}$ are vector fields on $U.$  

Observe that
in case that one is given  a basis of vector fields $\{e_i, f_{\alpha}\}$  defined on all of $M$ or in case one is satisfied to have a metric defined only locally on an open set $U$ one can choose arbitrary supernumbers $\{d_i\}$ and define a metric by requiring that 
$g(e_i,e_i)=d_i$ and that  $\Jcal_{\alpha \beta}=g(f_{\alpha},f_{\beta}).$ In such a case the $d_i$ are not functions and one can construct super metrics whose canonical forms are not generalized orthosymplectic matrices.   In this case the frame bundle reduces to a group defined by 
$\Gamma$ but this group would not be a generalized orthosymplectic group but something new.

So the question arises: what condition or conditions on $g$ will guarantee that $\Gamma$ is a generalized orthosymplectic matrix.

\begin{theorem} Assume that  $\{ \bar e_i, \bar f_{\alpha} \}$ is a basis of  local vector fields of $TM$ over $\Lambda^0$ defined at $p\in M$
with the properties guaranteed by the previous Theorem. If the diagonal elements 
$d_i=g(\bar e_i,\bar e_i)$
satisfy the condition
$$\frac{||s(d_i)||}{|\beta (d_i)|}<1,$$
then there exists a basis $\{ \hat e_i\}$ of local sections of $T^0M\rightarrow M$ defined on an open $U\subseteq M$ containing $p$ such that the canonical matrix $\Gamma_g$ of the metric $g$ relative  to the basis $\{ \hat e_i, \bar f_{\alpha} \}$ takes the form:

     \[  \Gamma_g=\left  ( \begin{array}{ccc}
                                          \eta    &   0 \\
                                                  
                                          0     &   \Jcal
                                          \end{array} \right)   \]

\noindent where $\eta$ is diagonal with entries the elements $\pm 1$ of $\Lambda^0$ and where
$\Jcal$ is the standard symplectic matrix above. Moreover one can always choose a basis $\{\tilde e_i\}$ of $T^0_pM$ such that $||\tilde e_i||=1$
and if such a basis is chosen the condition above is satisfied if and only if 
$$||s(\tilde d_i)||<\frac{1}{2}.$$

\end{theorem}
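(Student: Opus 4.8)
The plan is to pass from $\{\bar e_i\}$ to a new even frame $\hat e_i := c_i\,\bar e_i$, where each $c_i$ is an invertible even supernumber chosen so that $c_i^{2}d_i = \pm 1$; the hypothesis $\|s(d_i)\|/|\beta(d_i)|<1$ is precisely what guarantees that the required square root of $(\pm d_i)^{-1}$ exists as a convergent series in the Banach algebra $\Lambda^{0}$. First I would shrink $U$: since $d_i=g(\bar e_i,\bar e_i)$ is a $G^{\infty}$, hence continuous, $\Lambda^{0}$-valued function and $\beta$, $s$ and $\|\cdot\|$ are continuous, the strict inequality persists on a neighborhood of $p$, and on a possibly smaller one (still called $U$) the real number $\beta(d_i)$ is nowhere zero and of constant sign $\epsilon_i\in\{+1,-1\}$ for each $i$. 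Then $\epsilon_i d_i$ has body $|\beta(d_i)|>0$ on $U$; writing $\epsilon_i d_i=|\beta(d_i)|(1+u_i)$ with $u_i=s(d_i)/\beta(d_i)\in\Lambda^{0}$, $\beta(u_i)=0$, and $\|u_i\|=\|s(d_i)\|/|\beta(d_i)|<1$, I set $c_i:=|\beta(d_i)|^{-1/2}\sum_{k\ge 0}\binom{-1/2}{k}u_i^{\,k}$. Since $|\binom{-1/2}{k}|\le 1$ and $\Lambda^{0}$ is a Banach algebra, this series converges absolutely exactly when $\|u_i\|<1$, i.e.\ exactly under the hypothesis, and $c_i^{2}=|\beta(d_i)|^{-1}(1+u_i)^{-1}=(\epsilon_i d_i)^{-1}$, so $c_i^{2}d_i=\epsilon_i$ and $c_i$ is invertible in $\Lambda^{0}$ at every point of $U$.

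With $\hat e_i:=c_i\bar e_i$ on $U$, the family $\{\hat e_i\}$ is again a basis of local sections of $T^{0}M\to M$ over $\Lambda^{0}$ (each $c_i$ is an invertible even function). Orthogonality is preserved, $g(\hat e_i,\hat e_j)=c_ic_j\,g(\bar e_i,\bar e_j)=0$ for $i\neq j$, while $g(\hat e_i,\hat e_i)=c_i^{2}d_i=\epsilon_i=\pm 1$; and since $\hat e_i$ lies in $E=T^{0}M$ whereas the $\bar f_{\alpha}$ span the $g$-orthogonal complement $E^{\perp}$, every mixed term $g(\hat e_i,\bar f_{\alpha})$ vanishes. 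Hence the matrix of $g$ in the basis $\{\hat e_i,\bar f_{\alpha}\}$ is block diagonal with diagonal block $\eta=\mathrm{diag}(\epsilon_1,\dots,\epsilon_m)$ of entries $\pm 1$ and unchanged odd block $\Jcal$, which is the assertion.

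The step I expect to be the main obstacle is the smoothness claim — that $c_i$ is $G^{\infty}$, not merely continuous, on $U$. The cleanest route is to recognize $c_i$ as the Rogers $Z$-extension \cite{R} of the real-analytic function $t\mapsto t^{-1/2}$ on $(0,\infty)$ evaluated at $\epsilon_i d_i$; the radius of convergence of its Taylor expansion at $t=|\beta(d_i)|$ equals $|\beta(d_i)|$, so the series converges and defines a $C^{\infty}$, in fact superdifferentiable, function on the open set $\{z\in\Lambda^{0}:\beta(z)>0,\ \|s(z)\|<\beta(z)\}$, and by the Jadczyk--Pilch equivalence $G^{1}\cap C^{\infty}=G^{\infty}$ \cite{JP} it is $G^{\infty}$ there; composing with the $G^{\infty}$ map $q\mapsto\epsilon_i d_i(q)$, which by construction takes values in that set, gives $c_i\in G^{\infty}(U)$. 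It is here that the infinite-generator setting genuinely differs from Rogers', since the $Z$-series is an honest infinite sum whose convergence must be controlled by the $\ell_1$ norm — and the bound in the hypothesis is exactly that control.

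For the last assertion, rescaling $\bar e_i$ by a positive real changes $d_i$ by a positive real factor and so leaves the ratio $\|s(d_i)\|/|\beta(d_i)|$ untouched; choosing the factor so that $\|g(\tilde e_i,\tilde e_i)\|=1$ (this is what the normalization $\|\tilde e_i\|=1$ records) we may assume $\|\tilde d_i\|=1$, where $\tilde d_i=g(\tilde e_i,\tilde e_i)$. Since in the $\ell_1$ norm the body is the degree-zero coefficient and the soul is the remaining part, $\|\tilde d_i\|=|\beta(\tilde d_i)|+\|s(\tilde d_i)\|=1$, so $|\beta(\tilde d_i)|=1-\|s(\tilde d_i)\|$ and therefore $\|s(\tilde d_i)\|/|\beta(\tilde d_i)|<1\iff\|s(\tilde d_i)\|<|\beta(\tilde d_i)|=1-\|s(\tilde d_i)\|\iff\|s(\tilde d_i)\|<1/2$, as claimed.
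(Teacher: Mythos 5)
Your proof follows essentially the same route as the paper's: rescale each $\bar e_i$ by an invertible even supernumber whose square is $(\pm d_i)^{-1}$, constructed via the binomial series $(1+u_i)^{-1/2}$ with $u_i=s(d_i)/\beta(d_i)$, whose $\ell_1$-convergence is exactly governed by the hypothesis, and then the normalization $\|\tilde d_i\|=|\beta(\tilde d_i)|+\|s(\tilde d_i)\|=1$ gives the equivalence with $\|s(\tilde d_i)\|<\tfrac12$. Your extra paragraph on shrinking $U$ and on $G^\infty$-regularity of $c_i$ via the Jadczyk--Pilch criterion addresses a point the paper leaves implicit, and is a worthwhile addition rather than a deviation.
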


\begin{proof}  We show that there exists a supernumber $\lambda \in \Lambda^0$  such that 
$\hat e_i \equiv \lambda \bar e_i$  satisfies the condition:
$$g(\hat e_i,\hat e_i)= \frac{\beta(d_i)}{|\beta(d_i)|}$$
where $d_i=g(\bar e_i,\bar e_i).$ Such a supernumber  clearly must satisfy the condition 
$$\lambda^2(\beta(d_i)+s(d_i))=\lambda^2 d_i=\frac{\beta(d_i)}{|\beta(d_i)|}.$$
Thus we must have $\lambda^2(1+\frac{s(d_i)}{\beta(d_i)})=\frac{1}{|\beta(d_i)|}$
and consequently that 
$$\lambda=\frac{1}{\sqrt {|\beta(d_i)|}} \  \big (1+\frac{s(d_i)}{\beta(d_i)}\big)^{-\frac{1}{2}}.$$
The last equation must be understood in terms of the Binomial Theorem for fractional powers.
 The series expansion of $(1+\mu)^{-\frac{1}{2}}$ converges for supernumbers $\mu$ provided $||\mu||<1.$ Thus $\lambda $ is well-defined if and only if $\mu=\frac{s(d_i)}{\beta(d_i)}$
 satisfies the condition $||\mu||=\frac{||s(d_i)||}{|\beta (d_i)|}<1.$
 
 Next observe that if $\tilde e_i=|| d_i ||^{-\frac{1}{2}}\bar e_i,$ then $||g(\tilde e_i,\tilde e_i)||=1.$ 
Consequently, it is no loss of generality to assume that $||d_i||=1.$ If we do so, then
$1=||d_i||=|\beta(d_i)|+||s(d_i)||$ and
$$\frac{||s(d_i)||}{|\beta (d_i)|}<1 \Longleftrightarrow ||s(d_i)||<|\beta(d_i)| 
\Longleftrightarrow ||s(d_i)||<1-||s(d_i)||$$$$ \Longleftrightarrow 2||s(d_i)||<1
\Longleftrightarrow ||s(d_i)||<\frac{1}{2}.$$

\end{proof}

\begin{definition} Let $g$ be a super Riemannian metric on the supermanifold $M.$ We say $L$ is a local isometry of $g$ iff there is an open subset $U$ of $M$ such that 
for each $q\in U,  L_q$ is an even endomorphism of $T_qM$ such that
$g(L_q(x),L_q(y))=g(x,y), \ x,y\in T_qM.$ 
\end{definition}

Let $L$ define such a local isometry on $U$ and let $\{v_{\alpha}\}$ be a pure basis of local sections of  
$TM\rightarrow M$ defined on $U$
such that the matrix of $g$ relative to this basis is
 
     \[  \Gamma=\Gamma_g= \left ( \begin{array}{ccc}
                                           \eta    &   0 \\
                                                  
                                          0     &   \Jcal
                                          \end{array} \right)   \]  

\noindent  where $\eta$ is a diagonal matrix with entries $\eta_{\alpha \alpha}=g(v_{\alpha},v_{\alpha}), 1 \leq \alpha \leq m.$ If $N$ is an even matrix and 
 
     \[ N= \left( \begin{array}{ccc}
                                            A    &   C \\
                                                  
                                          D     &    B
                                          \end{array} \right),   \]   
                                          
\noindent  then $N$ is the matrix of a local isometry $L$ relative to the basis $\{v_{\alpha}\}$ if and only if                                         
$$N^{ST} \Gamma N= \Gamma$$
where $N^{ST}$ is the super transpose of $N$ and is defined by

     \[ N^{ST}= \left( \begin{array}{ccc}
                                            A^T    &   -D^T \\
                                                  
                                          C^T     &    B^T
                                          \end{array} \right).  \]   
   
\noindent In case $\eta$ has diagonal entries which are ordinary numbers  one can arrange that the nonzero entries of $\eta$ are $\pm 1$ and we can further arrange the entries so that
the positive entries precede the negative entries on the diagonal. In the case that the matrix $\eta$ of the 
"diagonalization"  of  the matrix of $g$ has ordinary numbers as entries  we say that  $g$ is {\bf body reducible}.

Using the notation of the last definition, it is straightforward to show that $N$ is the matrix of a 
local isometry $L$ defined on $U$  iff

\[  \begin{array}{lcc}

(1) \  A^T\eta A-D^T\Jcal D=\eta  \\
(2) \ C^T \eta C+ B^T\Jcal B= \Jcal  \\
(3) \  A^T\eta C -D^T \Jcal B=0 .\\
\end{array} \] 

We intend to find the simply connected Banach Lie group which covers the group $\Gcal_U$ of local isometries of $g$ defined on $U$ and along the way we will show that in case $g$ is body reducible it is a super Lie group.

To do the latter we plan to use our generalization of Roger's Theorem. Thus we first find the 
Lie algebra $Lie( \Gcal_U)$ of the local isometry group $\Gcal_U$ of $g$ and show that it is a conventional super Lie algebra.
We will then use the Roger's result to find the unique simply connected Banach Lie group $\widetilde \Gcal_U$ which has $Lie(\Gcal_U)$ as its Lie algebra. It turns out that since $Lie(\Gcal_U)$ is a conventional super Lie algebra it will follow that $\widetilde \Gcal_U$ is a super Lie group. Consequently, the covering group of
the group of local isometries of a super Riemannian metic is a super Lie group and so has the right to be called the super spin group of the super metric $g.$

First we find the Lie algebra of the Banach Lie group $\Gcal_U.$ To do this
let $\lambda \rightarrow N(\lambda)$ denote a curve through the identity of the local isometry group $\Gcal_U.$
Equations (1),(2),(3) above hold for $A(\lambda), B(\lambda) ,C(\lambda), D(\lambda).$ Differentiating
these equations and setting $\lambda=0$ yields

\[  \begin{array}{lcc}

(1) \  a^T\eta +\eta a=0  \\
(2) \ b^T\Jcal + \Jcal  b=0 \\
(3) \ \eta c -d^T \Jcal =0 \\
\end{array}. \] 

\noindent since $A(0)=Id, B(0)=Id, C(0)=0,D(0)=0.$ Here, of course, $a=A'(0),b=B'(0),c=C'(0)$ and 
$d=D'(0).$ It is not difficult to check that the Lie algebra $Lie(\Gcal_U)$
of $\Gcal_U$ is the set of all matrices

     \[ \left( \begin{array}{ccc}
                                            a    &  c= \eta^{-1} d^T\Jcal \\
                                                  
                                           d     &    b
                                          \end{array} \right)  \]   
                                          
\noindent such that   $a^T\eta =-\eta a ,    b^T\Jcal =- \Jcal  b$ and where all the entries of the matrices $a$ and
$b$ are even supernumbers while all the entries of the matrices $c$ and $d$ are odd supernumbers.
This Lie algebra is the Lie algebra of a Banach Lie group so it is also the Lie algebra of its covering group $\widetilde \Gcal_U. $

\underline {In the special case that $g$ is body reducible}, we can say more. In this case we can 
apply our generalization of Roger's result in order to find a group
which has $Lie(\Gcal_U)$ as its Lie algebra and which is simply connected.

Let    $\gfrak=\gz \oplus \gfrak^1$ denote the $\Z_2$ graded Lie algebra over $\R$ of dimension 
$(m,n)$  defined as follows. Let $\gz$ denote the set of all matrices                            
       
     \[ \left( \begin{array}{ccc}
                                            a    &   0 \\
                                                  
                                           0    &    b
                                          \end{array} \right) \]   
                                     
 \noindent where $a^T \eta =-\eta a $ and $b^T\Jcal =-\Jcal b,$ but where all the entries of both $a$ and $b$ are
  ordinary numbers. Let $\gfrak^1$ denote the set of all matrices

     \[ \left( \begin{array}{ccc}
                                            0   &   c \\
                                                  
                                           d     &    0
                                          \end{array} \right) \]   
                                          
\noindent such that $\eta c= d^T \Jcal $ where all the entries of $c$ and $d$ are ordinary numbers.
 Now it is easy to show that $\frak g=\gz\oplus \gfrak^1$ is a $\Z_2$ graded Lie algebra over $\R$ of dimension $(m,n)$ and that $\Lambda \otimes \gfrak$ is a conventional super Lie algebra.
 
 To apply the generalization of Rogers Theorem proved above, let  $\ufrak=\Lambda \otimes \gfrak$
 and let $\hfrak=
  (\Lambda^0 \otimes \gz) \oplus (\Lambda^1 \otimes \gfrak^1) $ denote the even Lie subalgebra of $\ufrak.$   It is easy to see that $\hfrak=Lie( \Gcal_U).$  Indeed,  $\hfrak$ is precisely the set of all matrices $\ell$
  such that 
  
     \[ \ell=\left( \begin{array}{ccc}
                                            a    &   c \\
                                                  
                                           d     &    b
                                          \end{array} \right)  \]   
                    
  \noindent  where $\ell^{ST}\Gamma=-\Gamma \ell$ and where the entries of $a$ and $b$ are even supernumbers and the entries of $c$ and $d$ are odd supernumbers. 
  
 It follows that $\hfrak$ is precisely $Lie(\Gcal_U)$ since $\ell^{ST}\Gamma=-\Gamma \ell$ 
  iff  
  $$a^T\eta=- \eta a,  \quad b^T \Jcal=-\Jcal b, \quad \eta c=d^T \Jcal $$
 and these are precisely the conditions required in order that $\ell$ belong to $Lie(\Gcal_U).$
 Observe that this argument only holds when $\eta$ has entries ordinary numbers.

The following theorem is an immediate consequence of our remarks above.

\begin{theorem} Assume that the canonical form of the super metric $g$ is body reducible and let $\tilde G$ denote the unique simply connected Lie group of the Lie algebra
$\gz=\beta(Lie (\Gcal_U)).$ Then there exists an infinite dimensional quasi-nilpotent Banach Lie group $N$ 
and an action of $\tilde G $ on $N$ which defines a semi-direct product structure on $\tilde G\times N$ 
such that \newline
(1) $\tilde \Gcal_U =\tilde G \rtimes N$ and \newline
(2) $\tilde G \rtimes N $ is a super Lie group whose super Lie algebra is the conventional
Lie algebra $\Lambda \otimes \gfrak$ where $\gfrak=\gz \oplus \gfrak^1$ is a $\Z_2$ graded
Lie algebra with $\gz=\beta(Lie(\Gcal_U)) $ and $\gfrak^1$ is the vector space of matrices over $\R$ defined above. 

\end{theorem}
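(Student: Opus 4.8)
The plan is to deduce the theorem directly from our generalization of Rogers' Theorem established in Section~3, applied to the $\Z_2$ graded Lie algebra $\gfrak=\gz\oplus\gfrak^1$ just constructed from the local isometry equations. I would begin by recording what the preceding paragraphs have already shown: $\gfrak$ --- the real block matrices $\mathrm{diag}(a,b)$ with $a^{T}\eta=-\eta a$, $b^{T}\Jcal=-\Jcal b$, together with the real off-diagonal matrices with blocks $c,d$ subject to $\eta c=d^{T}\Jcal$ --- is a finite-dimensional real Lie algebra, hence a $\Z_2$ graded Banach Lie algebra of dimension $(m,n)$; $\ufrak=\Lambda\otimes\gfrak$ is its conventional super Lie algebra; and its even submodule $\hfrak=(\Lambda^{0}\otimes\gz)\oplus(\Lambda^{1}\otimes\gfrak^{1})$ coincides with $Lie(\Gcal_U)$. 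This last identification is precisely where body reducibility enters: only when $\eta$ has ordinary-number entries is the condition $\ell^{ST}\Gamma=-\Gamma\ell$ cutting out $Lie(\Gcal_U)$ literally the $\Lambda$-linear extension of the defining relations of $\gfrak$.

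Next I would feed this $\gfrak$ into the theorem of Section~3. It supplies the unique simply connected finite-dimensional Banach Lie group $\widetilde G$ with Lie algebra $\gz=\beta(Lie(\Gcal_U))$; the closed ideal $\nfrak=\ker\bigl(\beta|_{\hfrak}\colon\hfrak\to\gz\bigr)=(s(\Lambda^{0})\otimes\gz)\oplus(\Lambda^{1}\otimes\gfrak^{1})$, which by Lemma~\ref{L:spectrum} has $ad_X$ of spectrum $\{0\}$ for every $X\in\nfrak$ and is therefore quasi-nilpotent in the sense of Wojtyn'ski~\cite{W}, so that the globally convergent Baker-Campbell-Hausdorff series endows $\nfrak$ with a simply connected infinite-dimensional Banach Lie group structure $N=\nfrak$; the action $\alpha$ of $\widetilde G$ on $N$ obtained by exponentiating the adjoint action of $\gz$ on $\nfrak$; and the semi-direct product $H=\widetilde G\rtimes_{\alpha}N$ carrying a $G^{\infty}$ atlas (charts valued in $\K^{m|n}$, transitions right $\Lambda^{0}$-linear and $C^{\infty}$, hence $G^{\infty}$ by Lemma~3.4 and \cite{JP}) relative to which $H$ is a $G^{\infty}$ super Lie group with underlying Banach Lie algebra $\hfrak$ and, $\hfrak$ being the even part of $\ufrak$, with corresponding conventional super Lie algebra $\ufrak=\Lambda\otimes\gfrak$.

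It then remains to identify $H$ with the simply connected covering group $\widetilde\Gcal_U$. Since $\Gcal_U$ is a Banach Lie group with $Lie(\Gcal_U)=\hfrak$, the universal cover of its identity component is a simply connected Banach Lie group, again with Lie algebra $\hfrak$. I would invoke the lifting theorem for Banach Lie groups: a homomorphism of Banach Lie algebras from the Lie algebra of a simply connected Banach Lie group integrates uniquely to a homomorphism into any connected Banach Lie group carrying the target as its Lie algebra. Applying this to $Id_{\hfrak}$ in both directions yields homomorphisms $H\to\widetilde\Gcal_U$ and $\widetilde\Gcal_U\to H$ whose compositions are the identity near the identity element, hence (both groups being connected) are mutually inverse isomorphisms; thus $H\cong\widetilde\Gcal_U$. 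This gives (1), $\widetilde\Gcal_U=\widetilde G\rtimes N$, and transporting the $G^{\infty}$ super Lie group structure of $H$ across this isomorphism gives (2): $\widetilde\Gcal_U$ is a $G^{\infty}$ super Lie group whose super Lie algebra is the conventional algebra $\Lambda\otimes\gfrak$, with $\gz=\beta(Lie(\Gcal_U))$ and $\gfrak^{1}$ the real matrix space described above.

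The one step I expect to require genuine care is this final identification, since the Banach category lacks two conveniences of the finite-dimensional theory. First, Cartan's closed-subgroup theorem fails, so I cannot simply declare the closed subgroup $\Gcal_U\subseteq GL(m+n,\Lambda)$ to be a Banach Lie group with the expected Lie algebra; I would instead exhibit $\Gcal_U$ as the zero set of the Banach-analytic submersion $N\mapsto N^{ST}\Gamma N-\Gamma$, which is transparent precisely because $\Gamma$ has ordinary-number entries (body reducibility once more), and read off $Lie(\Gcal_U)=\hfrak$ in the process. Second, one must be careful that ``$\widetilde\Gcal_U$'' denotes the universal cover of the identity component, since the orthogonal block renders $\Gcal_U$ disconnected. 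The enlargeability of $\hfrak$ is itself no obstacle here, as the theorem of Section~3 already furnishes the explicit model $H$; so the only external input beyond that theorem is the lifting theorem, which does hold for Banach Lie groups (its proof reduces to path lifting inside a local Lie group chart, valid in the Banach setting). With these points in place the argument is indeed, as noted, an immediate consequence of the preceding results.
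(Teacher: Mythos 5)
Your proposal is correct and follows essentially the same route as the paper: construct $\gfrak=\gz\oplus\gfrak^1$ from the linearized isometry conditions, observe that body reducibility makes $\hfrak=(\Lambda^{0}\otimes\gz)\oplus(\Lambda^{1}\otimes\gfrak^{1})$ literally equal to $Lie(\Gcal_U)$, and invoke the Section~3 generalization of Rogers' Theorem to produce the simply connected $G^{\infty}$ super Lie group $H=\widetilde G\rtimes N$, which is then identified with $\widetilde\Gcal_U$. The paper presents the theorem as an ``immediate consequence'' of the preceding discussion and omits the final identification argument you spell out (lifting the identity on $\hfrak$ in both directions and worrying about Cartan's theorem and connectedness in the Banach setting); your added care there is a fair strengthening of a step the paper takes for granted, not a different method.
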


\begin{definition} If $g$ is a super Riemannian metric on a super manifold $M$ then the simply
connected covering group $\tilde \Gcal_U$ of the group $\Gcal_U$ of local isometries of $g$ defined on $U$ is called a local
spin group of $g.$
\end{definition}

At this point all we can say in general regarding the canonical form of an arbitrary  super metric is that
its matrix may be written in the form  

     \[ \left( \begin{array}{ccc}
                                          \eta    &   0 \\
                                                  
                                          0     &    \Jcal
                                          \end{array} \right)   \]   

\noindent where $\eta=diag(d_1,d_2,\cdots, d_m).$ We have found conditions relative to which $d_i=\pm1$
for each $i$ but a general result is still pending.

 In case $m=p+q$ and $d_i=1 ,1\leq i \leq p,
d_i=-1, p+1\leq i \leq p+q$ we see that the local isometry group is a generalized orthosymplectic group.
To find the corresponding spin group, recall that this group is the set of all even matrices 

     \[N= \left( \begin{array}{ccc}
                                           A    &   C \\
                                                  
                                          D     &   B
                                          \end{array} \right)   \]

\noindent such that $N^{ST} \Gamma N= \Gamma.$ The Lie algebra of this group is the set of all even matrices 

  \[ \ell=\left( \begin{array}{ccc}
                                            a    &   c \\
                                                  
                                           d     &    b
                                          \end{array} \right)  \]   
                    
  \noindent  where $\ell^{ST}\Gamma=-\Gamma \ell.$ The body mapping sends this Lie algebra onto the set
  of matrices

  \[ \beta(\ell)=\left( \begin{array}{ccc}
                                            \beta(a)    &   0 \\
                                                  
                                           0     &    \beta(b)
                                          \end{array} \right)  \]   
  
\noindent where $\beta(a)^T\eta=-\eta \beta(a)$ and  $ \beta(b)^T \Jcal=-\Jcal \beta(b).$ 

We denote this Lie algebra by $\beta(Lie \  \Gcal_U)$ and observe that its Lie group is $SO(p,q) \times Sp(n)$
where $Sp(n)$ is the symplectic group and  we consider only the nontrivial case with $n$  even. Now this Lie
group is not simply connected and its simply connected covering group $\widetilde G$ which is required by our last Theorem above 
is the group $\widetilde G=Spin(p,q) \times Mp(n)$ where $Mp(n)$ is the metaplectic group.
Now the quasi-nilpotent group $N$ required by our Theorem is obtained as follows. As in our generalization of
Rogers Theorem let $\frak h=(\Lambda^0\otimes \frak g^0)\oplus (\Lambda^1\otimes \frak g^1)$ where 
$\frak g^0=\beta(Lie \ \Gcal_U)$ and $\frak g^1$ is the set of matrices with real entries of the form
 
     \[ \left( \begin{array}{ccc}
                                            0   &   c \\
                                                  
                                           d     &    0
                                          \end{array} \right) \]   
                                          
\noindent where $c=\eta d^T \Jcal. $ Thus $\frak h$ is the set of even matrices of the form

     \[ \ell=\left( \begin{array}{ccc}
                                            a    &   c \\
                                                  
                                           d     &    b
                                          \end{array} \right)  \] 
  
   \noindent and the Lie algebra $\frak n=ker \beta $ is the set of matrices in $\frak h$ of the form
   
     \[ \ell=\left( \begin{array}{ccc}
                                            s(a)    &   c \\
                                                  
                                           d     &    s(b)
                                          \end{array} \right).  \]   
    The group $N$ is simply $\frak n$ with a group operation $\diamond$ defined on it by                                       
                                          
   \[ exp \big[\left ( \begin{array}{cccccccccccc}
                                            s(a_1)    &   c_1 \\
                                                  
                                           d_1     &    s(b_1)
                                          \end{array} \right) 
 \diamond \left ( \begin{array}{ccc}
                                            s(a_2)    &   c_2 \\
                                                  
                                           d_2     &    s(b_2)
                                          \end{array} \right)  \big ] 
=exp\left ( \begin{array}{cccccc}
                                            s(a_1)    &   c_1 \\
                                                  
                                           d_1     &    s(b_1)
                                          \end{array} \right) 
 exp\left ( \begin{array}{ccc}
                                            s(a_2)    &   c_2 \\
                                                  
                                           d_2     &    s(b_2)
                                          \end{array} \right) .  \]

In this case the spin group of the super metric $g$ is a semi-direct product of the simply connected group 
$Spin(p,q) \times Mp(n)$ and the group $N=\frak n$ with operation $\diamond$ defined above.. In the special case that $m=p,q=0$ the local isometry group is the orthosymplectic group and the spin group of the super metric $g$ is a semi-direct product of $Spin(p) \times Mp(n)$ and $N.$  In a case of physical interest whereby the super metric is locally super Lorentzian, i.e.,  where 
$p=1,q=3$ the spin group is a semi-direct product of $Sl(2,\C)\times Mp(n)$ and a quasi-nilpotent group $N$ defined as above.

Given a super metric $g$ we may define a principal bundle $\Fcal_g$ over $M$ whose
fiber over $p\in M$ is the set of all pure bases of $T_pM$ where $(X_i)_{i=1}^m$ is
a basis of $T^0M$ and $(X_i)_{i=m+1}^{m+n}$ is a basis of its $g$-orthogonal complement $E^{\perp}$
defined in one of the Lemmas above. In the case that the canonical form $\Gamma_g$ of $g$ is a generalized orthosymplectic matrix the bundle of frames $\Fcal_g$ clearly reduces to a principal fiber bundle $(\Ocal\Scal)_g$with the generalized orthosymplectic group $Osp(p,q,n)$ as its structure group. 

 As usual the converse is also true, i.e., any reduction of the bundle $\Fcal_g$ to the group $Osp(p,q,n)$  defines a super metric whose canonical matrix is generalized orthosymplectic.

The general case is more complex. In case one can fix invertible supernumbers $\{d_i\}_{i=1}^m$ and obtain
a global canonical form $\Gamma_g$ defined in terms of these constant supernumbers as 
suggested above then one has a reduction of the bundle $\Fcal_g$ to a bundle  $\Ocal\Scal(d_i)$ whose structure group
is the group  $O(d_i,n)=\{ N \ | \ N^{ST} \Gamma N= \Gamma \}$ where

     \[ \Gamma= \left( \begin{array}{ccc}
                                          \eta    &   0 \\
                                                  
                                          0     &    \Jcal
                                          \end{array} \right)   \]   

\noindent with $\eta=diag(d_1,d_2,\cdots, d_m).$ In this case one obtains a generalization of the
generalized orthosymplectic group and its corresponding bundle.

The spin bundle $\widetilde \Scal_g$ of any one of these bundles $\Ocal \Scal(d_i)$  is defined, as usual, to be a bundle having the covering
group $\widetilde{ O(d_i,n)}$ of  $O(d_i,n)$as structure group and which covers the bundle $\Ocal \Scal(d_i)$
in the usual fashion. The issue as to when such bundles exist is not addressed here but certainly  is of interest.
Observe that quasi-nilpotent groups are contractible so the factor $N$ of $\widetilde{ O(d_i,n)}$ offers no 
obstruction to the existence of $\widetilde \Scal_g.$

In this context one defines spinor fields, as usual, to be sections of vector bundles associated to 
representations of the structure group of the spin bundle $\widetilde \Scal_g.$  In the case that the structure group is the covering  group of the group $Osp(p,q,n)$ one requires  representations of the group 
$(Spin(p,q) \times Mp(n))\rtimes \frak n.$

Representations of $Spin(p,q)$ induce representations of \newline
$(Spin(p,q) \times Mp(n))\rtimes \frak n$
as do  also representations of $Mp(n).$ So ordinary spinor fields and symplectic spinors become  local  spinor fields arising from  the super metric. 
In general any representation of any closed subgroup of $(Spin(p,q) \times Mp(n))\rtimes \frak n$ induces a representation
of $(Spin(p,q) \times Mp(n))\rtimes \frak n$ and consequently gives rise to local super spinor fields in the sense described above.
In particular representations of $\frak n$ will induce local super spinor fields in the sense used here. The representation theory of
nilpotent Lie groups is a well-developed field of study, but the author knows of no research dealing with the representations 
of quasi-nilpotent Banach Lie groups.

It is beyond the scope of this paper  (and the author) to classify all representations of the group  
$(Spin(p,q) \times Mp(n))\rtimes \frak n.$ 
It  is of interest to find all  irreducible representations of $(Spin(p,q) \times Mp(n))\rtimes \frak n.$ The problem of finding all irreducible representations is difficult. Mackey has proved Theorems characterizing all irreducible unitary representations of groups which are semi-direct products of groups of the form $L\rtimes A$ where $L$ is any  locally compact group and $A$ is a locally compact abelian group, but the present author knows of no generalization which computes irreducible representations of such a semi-direct product when $A$ is a general nilpotent Lie group. The case when $A$ is quasi-nilpotent is even more remote since such groups are not locally compact.

 \end{document}